\documentclass{amsart}
\usepackage{graphicx,euscript,amsmath,bm,enumerate,lipsum}
\usepackage[alphabetic]{amsrefs}
%\usepackage[pagewise]{lineno}\linenumbers
%\allowdisplaybreaks

\newtheorem{theorem}{Theorem}[section]

\theoremstyle{definition}
\newtheorem{remark}{Remark}

\newtheorem*{lemma*}{Lemma}

\newcommand{\myfig}[3][]{
 \begin{figure}
 \begin{center}
 {\mbox{\includegraphics[#1]{#2.eps}}}
 \end{center}
 \caption{\label{#2}#3}
 \end{figure}}

\def\vc#1{\mathbf #1}
\def\mymatrix#1{\begin{bmatrix}#1\end{bmatrix}}

\def\O{{\mathcal O}}
\def\F{{\mathcal F}}
\long\def\ignore#1{}
\def\tt{\theta}

\def\cc{\gamma}

\def\H{{\mathcal H}}

\def\nd{\noindent}
\def\fref#1{Figure~\ref{#1}}
\def\aa{\alpha}
\def\ss{\sigma}

 \providecommand{\Pic}{\mathop{\rm Pic}\nolimits}

\def\Bbb#1{{\mathbb #1}}

\def\E{{\mathcal E}}

\def\bb{\beta}
\def\dd{\delta}

\def\CC{{\Gamma}}
\def\GG{{\Gamma}}

\def\LL{\Lambda}

\def\K{{\mathcal K}}

\def\L{{\mathcal L}}

\def\proj{{\rm{proj}}}

\def\H{{\mathcal H}}

\renewcommand{\odot}{\mathbin{\mathchoice
  {\xcirc\scriptstyle}
  {\xcirc\scriptstyle}
  {\xcirc\scriptscriptstyle}
  {\xcirc\scriptscriptstyle}
}}
\newcommand{\xcirc}[1]{\vcenter{\hbox{$#1\circ$}}}

\def\P{{\mathcal P}}

\begin{document}
\title{Enriques surfaces and an Apollonian packing in eight dimensions}
\author{Arthur Baragar}
\begin{abstract}
We call a packing of hyperspheres in $n$ dimensions an Apollonian sphere packing if the spheres intersect tangentially or not at all; they fill the $n$-dimensional space; and every sphere in the packing is a member of a cluster of $n+2$ mutually tangent spheres (and a few more properties described herein).  In this paper, we describe an Apollonian packing in eight dimensions that naturally arises from the study of generic nodal Enriques surfaces.  The $E_7$, $E_8$ and Reye lattices play roles.  We use the packing to generate an Apollonian packing in nine dimensions, and a cross section in seven dimensions that is weakly Apollonian.   Maxwell described all three packings but seemed unaware that they are Apollonian.  The packings in seven and eight dimensions are different than those found in an earlier paper.  In passing, we give a sufficient condition for a Coxeter graph to generate mutually tangent spheres, and use this to identify an Apollonian sphere packing in three dimensions that is not the Soddy sphere packing.
\end{abstract}
\subjclass[2010]{52C26, 14J28, 20F55, 52C17, 20H15, 11H31} %, 11H31, 06B99}
\keywords{Apollonius, Apollonian, circle packing, sphere packing, Soddy, Enriques surface, Enriques lattice, Reye lattice, E8 lattice, ample cone, Nef cone, lattice, crystalography}
\address{Department of Mathematical Sciences, University of Nevada, Las Vegas, NV 89154-4020}
\email{baragar@unlv.nevada.edu}
\thanks{\nd \LaTeX ed \today.}

\maketitle

\section*{Introduction}  That there is a connection between the Apollonian packing and rational curves on algebraic surfaces has only recently been explored (see \cite{Dol16a, Bar17}).  The connection so far has seemed a bit distant.  In this paper, we show a rather spectacular connection.  Let $X$ be an Enriques surface that contains a smooth rational curve (a {\it nodal} curve), but is otherwise generic.  Let $\LL$ be its Picard group, modulo torsion.  Then $\LL$ is isomorphic to the Enriques lattice $E_{10}$, which is an even unimodular lattice of signature $(1,9)$.  Since $\LL\otimes \Bbb R$ is a Lorentz space $\Bbb R^{1,9}$, it has a 9-dimensional copy of hyperbolic space $\Bbb H^9$ naturally imbedded in it.  Any nodal curve on $X$ has self intersection $-2$, so represents a plane in $\Bbb H^9$, which in the Poincar\'e upper-half hyperspace model is represented by an 8-dimensional hemi-sphere.  The boundary $\partial \Bbb H^9$ of $\Bbb H^9$, not including the point at infinity, is isomorphic to $\Bbb R^8$, and its intersection with a hyperbolic plane in $\Bbb H^9$ is a 7-dimensional hypersphere.  In this way, the set of nodal curves on $X$ gives us a configuration of hyperspheres in $\Bbb R^8$.  In this paper, we show that this configuration is an Apollonian packing, by which we mean the hyperspheres intersect tangentially or not at all, they fill $\Bbb R^8$, the packing is crystallographic, and it satisfies the fundamental property that makes it Apollonian, which is that every sphere in the configuration is a member of a  cluster of ten hyperspheres that are mutually tangent.   (See Section~\ref{s1.3} for precise definitions.)

Though the connection to Enriques surfaces is fascinating, this paper is really a study of sphere packings.
The underlying algebraic geometry is explained in \cite{All18, Dol16b}.  Our starting point is
the following Coxeter graph (also known as a  Dynkin diagram):
\begin{equation}\label{C2}%\begin{center} \label{C2}
\includegraphics{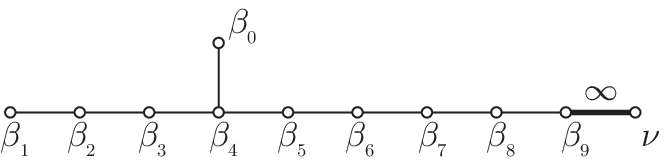}
%\end{center}
\end{equation}
Some may recognize that it generates the Reye Lattice.  The graph also appears in Maxwell's paper \cite[Table II, $N=10$]{Max82}, in which it is described as generating a sphere packing, meaning the spheres intersect tangentially or not at all.  It is one of many examples of what are sometimes called Boyd-Maxwell packings (see also \cite{CL15}).
Up until recently (see \cite{Bar18, Bar19}), it was believed that no Boyd-Maxwell packing in dimension $n\geq 4$ is Apollonian (see the {\it Mathematical Review} for \cite{Boy74}, \cite[p.~356]{LMW02}, and \cite[p.~33]{Dol16a}).  In \cite{Bar19}, we describe a different Apollonian packing (though not a Boyd-Maxwell packing) in eight dimensions.  The packing in this paper is more efficient, in the sense that its residual set is a subset of the residual set of the packing in \cite{Bar19}.  This also shows that there can be different Apollonian packings in the same dimension.

In Section \ref{s1.3}, we give a precise definition of a generalized Apollonian packing.  This is the author's attempt to define what was likely meant or understood by Boyd, Maxwell, and their contemporaries.

A cross section of a sphere packing gives a sphere packing in a lower dimension.  By taking a co-dimension one cross section perpendicular to nine of the ten mutually tangent spheres of our new packing in eight dimensions, we get a {\em weakly} Apollonian packing in seven dimensions, meaning there is a cluster of nine mutually tangent spheres, but not all spheres are a member of such a packing.  It too is described by Maxwell \cite[Table II, $N=9$, last line, first graph]{Max82}.  The resulting packing is different from and more efficient than the example in \cite{Bar19}, which is Apollonian.  

For Euclidean lattices, eight dimensions is special as it admits the even unimodular lattice $E_8$.  The uniradial sphere packing (i.e. all spheres have the same radius) with spheres of radius $1/\sqrt{2}$ and centered at vertices of the lattice gives the densest uniradial sphere packing in eight dimensions \cite{Via17}.  With an appropriate choice of point at infinity, the Apollonian packing of this paper includes this uniradial sphere packing.

In \cite{Bar18}, we generated Apollonian packings in dimension $n$ from uniradial sphere packings in dimension $n-1$.  Using a similar procedure and the $E_8$ lattice, we generate an Apollonian packing in nine-dimensions.  This too appears in Maxwell's paper \cite[Table II, $N=11$, second graph]{Max82}.   

This paper was directly inspired by a paper by Daniel Allcock \cite{All18}, who reproved a result (see Theorem \ref{tAllcock} below) that appears in \cite{CD89}.   Dolgachev attributes the original proof to Looijenga, based on an incomplete proof by Coble \cite{Cob19} from 1919.

The descriptions of the Apollonian packings of this paper do not require any background in algebraic geometry.  We will therefore keep the algebraic geometry to a minimum, restricting it as much as possible to Subsection \ref{sAG} and to remarks.

With respect to the organization of this paper,  Section \ref{s1} on Background includes a definition of higher dimensional analogs of the Apollonian circle packings, and the main result concerning Enriques surfaces from which the packing is derived; Section \ref{s2} is devoted to producing the Apollonian packing in eight dimensions;  In Section \ref{s4} we look at a cross section; and in Section \ref{s5} we build an example in nine-dimensions.  In Section \ref{s3} we describe a sufficient condition for a packing to have the (weak) Apollonian property and use it to find examples in Maxwell's list of packings, including a sphere packing in three dimensions that has the Apollonian property but is not the Soddy sphere packing.

\subsection*{Acknowledgements} The author is grateful to {\it zbMath} for the fortuitous request that he review Allcock's paper, and to Daniel Allcock who was kind and patient enough to explain details in that paper.  The author is also grateful to David Boyd for the many conversations through the years, and to Alex Kontorovich and Daniel Lautzenheiser for some useful conversations.  This material is based upon work supported by the National Science Foundation under Grant No. DMS-1439786 and the Alfred P. Sloan Foundation award G-2019-11406 while the author was in residence at the Institute for Computational and Experimental Research in Mathematics in Providence, RI, during the Illustrating Mathematics program.  Finally, the author wishes to thank his home institution, UNLV, for its sabbatical assistance during the Fall of 2019.  

\section{Background}\label{s1}

\subsection{The pseudosphere in Lorentz space}   Hyperspheres in $\Bbb R^n$ can be represented by $(n+2)$-dimensional vectors.  Boyd calls such coordinates  {\it polyspherical} coordinates, and attributes them to Clifford and Darboux from the late 19th century \cite{Boy74}.  The more modern interpretation is that they represent planes in $\Bbb H^{n+1}$ imbedded in an $(n+2)$-dimensional Lorentz space, which in turn represent hyperspheres on the boundary $\partial \Bbb H^{n+1}$.  We refer the reader to \cite{Rat06} for more details.

Let us set $N=n+2$.

Given a symmetric matrix $J$ with signature $(1,N-1)$, we define the Lorentz space, $\Bbb R^{1,N-1} $ to be the set of $N$-tuples over $\Bbb R$ equipped with the negative Lorentz product
\[
\vc u\cdot \vc v=\vc u^TJ\vc v.
\]
The surface $\vc x\cdot \vc x=1$ is a hyperboloid of two sheets.  Let us distinguish a vector $D$ with $D\cdot D>0$ and select the sheet $\H$ by:
\[
\H: \qquad \vc x\cdot \vc x=1, \qquad \vc x\cdot D>0.
\]
We define a distance on $\H$ by
\[
\cosh(|AB|)=A\cdot B.
\]
Then $\H$ equipped with this metric is a model of $\Bbb H^{N-1}$, sometimes known as the {\it vector model}.   Equivalently, one can define
\[
V=\{\vc x\in \Bbb R^{1,N-1}: \vc x\cdot \vc x>0\}
\]
and $\H=V/\Bbb R^*$, together with the metric defined by
\[
\cosh(|AB|)=\frac{A\cdot B}{|A||B|},
\]
where $|\vc x|=\sqrt{\vc x\cdot \vc x}$ for $\vc x\in V$.  For $\vc x\cdot \vc x<0$, we define $|\vc x|=i\sqrt{-\vc x\cdot \vc x}$.

Hyperplanes in $\H$ are the intersection of $\H$ with hyperplanes $\vc n\cdot \vc x=0$ in $\Bbb R^{1,N-1}$.  Such a plane intersects $\H$ if and only if $\vc n\cdot \vc n<0$.  Let $H_{\vc n}$ represent both the plane $\vc n\cdot \vc x=0$ in $\Bbb R^{1,N-1}$ and its intersection with $\H$.  The direction of $\vc n$ distinguishes a half space
\[
H_{\vc n}^+=\{\vc x: \vc n\cdot \vc x>0\},
\]
in either $\Bbb R^{1,N-1}$ or $\H$.

The angle $\tt$ between two intersecting planes $H_{\vc n}$ and $H_{\vc m}$ in $\H$ is given by
\begin{equation}\label{eq1}
|\vc n||\vc m|\cos \tt=\vc n\cdot \vc m,
\end{equation}
where $\tt$ is the angle in $H_{\vc n}^+\cap H_{\vc m}^+$.  If $|\vc n\cdot \vc m|=||\vc n||\vc m||$, then the planes are tangent at infinity.  If $|\vc n\cdot \vc m|>||\vc n||\vc m||$, then the planes do not intersect, and the quantity $\psi$ in  $|n||m|\cosh \psi=|\vc n\cdot \vc m |$ is the shortest hyperbolic distance between the two planes.

The group of isometries of $\H$ is given by
\[
\O^+(\Bbb R)=\{T\in M_{N\times N}: \hbox{$T\vc u\cdot T\vc v=\vc u\cdot \vc v$ for all $\vc u, \vc v\in \Bbb R^{1,N-1}$, and $T\H=\H$.}\}.
\]
Reflection in the plane $H_{\vc n}$ is given by
\[
R_{\vc n}(\vc x)=\vc x-2\proj_{\vc n}(\vc x)=\vc x-2\frac{\vc n\cdot \vc x}{\vc n\cdot \vc n}\vc n.
\]
The group of isometries is generated by the reflections.

Let $\partial \H$ represent the boundary of $\H$, which is a $(N-2)$-sphere.  It is represented by $\L^+/\Bbb R^+$ where
\[
\L^+=\{\vc x\in \Bbb R^{1,N-1}: \vc x\cdot \vc x=0, \vc x\cdot D>0\}.
\]
Given an $E\in \L^+$, let $\partial \H_E=\partial \H \setminus E\Bbb R^+$.  Then $\partial \H_E$ equipped with the metric $|\cdot |_E$ defined by
\[
|AB|_E^2=\frac{2A\cdot B}{(A\cdot E)(B\cdot E)}
\]
is the $(N-2)$-dimensional Euclidean space that is the boundary of the Poincar\'e upper half hyperspace model of $\H$ with $E$ the point at infinity.  In $\partial \H_E$, the plane $H_{\vc n}$ is represented by an $(N-3)$-sphere, which we denote with $H_{\vc n,E}$ (or just $H_{\vc n}$ if $E$ is understood, or sometimes just $\vc n$).

The curvature (the inverse of the radius, together with a sign) of $H_{\vc n, E}$ is given by the formula
\[
\frac{\vc n\cdot E}{||\vc n||}
\]
using the metric $|\cdot |_E$ \cite{Bar18}.  Here, $||\vc n||=-i|\vc n|=\sqrt{-\vc n\cdot \vc n}$.    By choosing a suitable orientation for $\vc n$, we get the appropriate sign for the curvature.

\subsection{Coxeter graphs}

Coxeter graphs can represent lattices in $\Bbb R^n$ or $\Bbb R^{1,n-1}$, or groups of isometries of $\Bbb S^{n-1}$ or $\Bbb H^{n-1}$.  Each node in a Coxeter graph represents a plane, which can be represented by its normal vector.  Two nodes are not connected if their corresponding planes are perpendicular.  A regular edge between two nodes indicates that those planes intersect at an angle of $\pi/3$, so their normal vectors are at an angle of $2\pi/3$ (some ambiguity here).  If the angle between two planes is $\pi/4$, then we indicate that with an edge subscripted (or superscripted) with a 4, or sometimes a double edge.  In general, an edge with a superscript of $m$ (or of multiplicity $m-2$) means the order of the composition of reflections in the two planes represented by the two nodes is $m$.  A thick line, like the one in (\ref{C2}), means the two planes are parallel, and this is sometimes indicated with a superscript of $\infty$.  A dotted edge means the two planes are ultraparallel.

The Coxeter graph represents the lattice $\vc v_1\Bbb Z\oplus \cdots \oplus \vc v_n\Bbb Z$, where the $\vc v_i$ are the nodes of the graph.  Note that not all graphs give lattices, as the vectors may be linearly dependent.  When the nodes are linearly independent (and sometimes when they are not), we get the bilinear form defined by the {\em incidence matrix} $[\vc v_i\cdot \vc v_j]$.  We can normalize the vectors $\vc v_i$ (say) so that $\vc v_i\cdot \vc v_i=-1$.  Then the angles noted by the edges define $\vc v_i\cdot \vc v_j$, where we take the angle between the normal vectors of the planes to be obtuse (so $\vc v_i\cdot\vc v_j\geq 0$).

The Coxeter graph can also represent a group, the {\it Weyl} group $\langle R_{\vc v_1},...,R_{\vc v_n}\rangle$.  When representing a group, it is often necessary for the vectors to be linearly dependent.

We will explain the notion of {\em weights} in the next subsection, which will give us an example to investigate.

\subsection{The Apollonian circle packing}  The Apollonian circle packing is a well known object and we assume the reader is already familiar with it.  The goal of this section is to think of it in a way that more naturally generalizes to higher dimensions.

Let us consider the strip version of the Apollonian circle packing shown in \fref{figApc}, and think of the picture as lying on the boundary $\partial \Bbb H^3$ of hyperbolic space.  Then each circle (and the two lines) represent a plane in $\Bbb H^3$, which in turn are represented by their normal vectors.  Let us represent four of the hyperbolic planes with the vectors $\vc e_i$ for $i=1,...,4$, as shown in \fref{figApc}.  We orient $\vc e_i$ so that $H_{\vc e_i}^+$ contains $H_{\vc e_j}$ for $i\neq j$, and assign them norms of $-2$, meaning $\vc e_i\cdot \vc e_i=-2$.  Then, by the tangency conditions, $\vc e_i\cdot \vc e_j=2$ for $i\neq j$ (see equation (\ref{eq1})).  We define $J_4=[\vc e_i\cdot \vc e_j]$, which has signature $(1,3)$.  (It is easy to see that $4$ and $-4$ are eigenvalues, the latter with a 3-dimensional eigenspace.)  Thus, $J_4$ defines a Lorentz product in $\Bbb R^{1,3}$.

\myfig{figApc}{The strip version of the Apollonian packing.  The dotted curves $\aa_i$ represent symmetries of the packing.  The fundamental domain $\F_4$ for the group $\O^+_{\LL_4}$ is the region above the shaded square and above the plane $H_{\aa_3}$. }

Note that $\vc e_i\cdot(\vc e_i+\vc e_j)=0$, so $\vc e_i+\vc e_j$ is the point of tangency between the circles represented by $\vc e_i$ and $\vc e_j$.  Thus, our point at infinity is $E=\vc e_1+\vc e_2$, and the circle represented by $\vc e_i$ is $H_{\vc e_i,E}$.

There are obvious symmetries of the Apollonian packing, as shown in \fref{figApc}.  In the plane, the symmetries are reflection in the lines $\aa_1$, $\aa_2$, and $\aa_4$, and inversion in the circle $\aa_3$.  The Apollonian packing is the orbit of $\vc e_1$ under the action of the group generated by these symmetries.

Thought of as actions in $\Bbb H^3$, the symmetries are reflection in the planes $H_{\aa_i}$, so the packing is the $\CC_4$-orbit of $H_{\vc e_1}$, where
\[
\CC_4=\langle R_{\aa_1}, R_{\aa_2}, R_{\aa_3}, R_{\aa_4}\rangle.
\]
It is fairly easy to solve for $\aa_i$ in the basis $\vc e=\{\vc e_1,...,\vc e_4\}$.  For example, we note that $\aa_1\cdot \vc e_i=0$ for $i\neq 4$, since it is perpendicular to those planes.  Solving, we get $\aa_1=(1,1,1,-1)$, up to scalars.  The others are: $\aa_2=(0,0,-1,1)$, $\aa_3=(0,-1,1,0)$, and $\aa_4=(-1,1,0,0)$.
%For each, we chose a minimal solution in $\LL_4$; we explain our choice of orientation in a moment.

We let $\LL_4=\vc e_1\Bbb Z\oplus ...\oplus \vc e_4\Bbb Z$, which we call the {\it Apollonian lattice}.  Let
\[
\O^+_{\LL_4}=\{T\in \O^+: T\LL_4=\LL_4\}.
\]
It is straight forward to verify that $R_{\vc e_i}$ and $R_{\vc \aa_i}\in \O^+_{\LL_4}$.  Let $G_4=\langle \CC_4, R_{\vc e_1}\rangle$.  A fundamental domain $\F_4$ for this group is the region bounded by the four planes $H_{\aa_i}$, the pane $H_{\vc e_1}$, and with cusp $E$ at the point at infinity:
%, as shown in \fref{figAp}:
\[
\F_4=H^+_{\aa_1}\cap \cdots \cap H^+_{\aa_4} \cap H^+_{\vc e_1}.
\]
Since $\F_4$ has finite volume, we know $G_4$ has finite index in $\O^+_{\LL_4}$, and it is not hard to verify that the two are equal.

Let us also use this example to learn a little about Coxeter graphs.  The Coxeter graph for the planes that bound $\F_4$ is the following:
\begin{equation}\label{C3}
\includegraphics{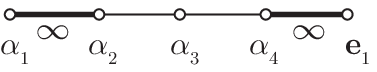}
\end{equation}

We note, from our earlier calculations, that $\aa_i\cdot \aa_i=-8$.   From the Coxeter graph, we therefore get $\aa_1\cdot \aa_2=8$, $\aa_2\cdot \aa_3=4$, $\aa_3\cdot \aa_4=4$, and all other products $\aa_i\cdot \aa_j$ for $i<j$ are zero.  This gives us the matrix
\[
J_\aa=[\aa_i\cdot \aa_j]=\mymatrix{-8&8&0&0 \\ 8&-8&4&0 \\ 0&4&-8&4 \\ 0&0&4&-8}.
\]
Our Lorentz product, in this basis, is $\vc x \cdot \vc y=\vc x^tJ_{\aa}\vc y$.  Let $\LL_{\aa}=\aa_1\Bbb Z\oplus \cdots \oplus \aa_4\Bbb Z$, so $\LL_{\aa}\subset \LL_{4}$.  Since $\det(J_\aa)=4\det(J_4)$, the index is two.

The fundamental domain $\F_4$ has five vertices:  The point $E$ at infinity, and the four vertices on $H_{\aa_3}$, the hemisphere above the dotted circle (see \fref{figApc}).  Let $w_i$ be the intersection of the three $\aa_j$ with $j\neq i$, so $w_i$ is a vertex of $\F_4$ for $i=1$ and $2$, and $w_3$ is the point at infinity.  The $w_i$'s satisfy $w_i\cdot \aa_j=0$ for $i\neq j$, so in the basis $\aa=\{\aa_1,...,\aa_4\}$, are the rows of $J_{\aa}^{-1}$, up to scalars.  To express these points in the basis $\vc e$, we multiply on the left by the change of basis matrix $Q=[\aa_1|\aa_2|\aa_3|\aa_4]$ whose columns are $\aa_i$; the columns of the resulting matrix are the $w_i$'s.  The $w_i$'s are called {\em weights}.  The incidence matrix for the $w_i$s is $[w_i\cdot w_j]=J_\aa^{-1}J_\aa (J_\aa^{-1})^t=(J_\aa^{-1})^t=J_\aa^{-1}$.  The first and second diagonal elements of $J_\aa^{-1}$ are positive, indicating that $w_1$ and $w_2$ lie in $\H$.  The third diagonal element is $0$, indicating $w_3$ lies on $\partial \H$, so is a cusp (it is the point $E$ at infinity).  The last diagonal element is negative, so $w_4$ represents a plane in $\H$.  That plane is perpendicular to $H_{\aa_1}$, $H_{\aa_2}$, and $H_{\aa_3}$, so is the plane $H_{\vc e_1}$.

The other two vertices of $\F_4$ can be found in a similar way by replacing $\aa_4$ with $\vc e_1$.   This is plain to see from the picture (see \fref{figApc}).  Combinatorially, the intersection of any three of the five faces should give us a potential vertex, but since $H_{\aa_4}$ and $H_{\vc e_1}$ are parallel, any combination that includes those two will give at most their point of tangency.

\begin{remark}  The symmetry of the Coxeter graph about the node $\aa_3$ suggests that there is a symmetry that sends $\vc e_1$ to $\aa_1$, etc.  That symmetry is reflection in the diagonal of slope one of the shaded square in \fref{figApc}.  However, since the norms of $\vc e_1$ and $\aa_1$ are different, it is not a symmetry of the lattice $\LL_4$.  There is a different lattice that we could have considered, namely the one generated by the Coxeter graph (\ref{C3}) but with $\aa_i^2=-2$.  There are some advantages to looking at that graph.  Reflection in the diagonal is a symmetry of that lattice.
\end{remark}

\subsection{Generalized sphere packings}\label{s1.3}

A {\it sphere packing} in $\Bbb R^n$ is a configuration of oriented $(n-1)$-spheres that intersect tangentially or not at all.  By oriented, we mean each sphere includes either the inside (a ball) or the outside.  The trivial sphere packing is a sphere and its complement.  We will not consider trivial sphere packings.

Maxwell calls $\P\subset \Bbb R^{1,N-1}$ a {\it packing} if for all $\vc n, \vc n'\in \P$, there exists a positive constant $k$ such that $\vc n\cdot \vc n=-k$ and $\vc n\cdot \vc n'\geq k$ \cite{Max82}.  Given a point $E$ for the point at infinity, the packing $\P$ defines a sphere packing
\[
\P_E=\bigcap_{\vc n\in \P}H_{\vc n,E}^-\subset \partial \H_E \cong \Bbb R^{n}.
\]
We call $\P_E$ a {\it perspective} of $\P$.

For example, the Apollonian packing in $\Bbb R^{1,3}$ is $\P_4=\CC_4(\vc e_1) $ and the strip packing of \fref{figApc} is $\P_{4,\vc e_1+\vc e_2}$.  An Apollonian packing derived from a different initial cluster of four mutually tangent circles is a different perspective of $\P_4$.

We think of $\P$ as defining a {\it cone}
\[
\K_{\P}=\bigcap_{\vc n\in \P}H_{\vc n}^+
\]
in $\Bbb R^{1,N-1}$, or a {\it polyhedron}
\[
\K_{\P}\cap \H
\]
in $\Bbb H^{N-1}$.  The {\it residual set} of a sphere packing is the complement of the sphere packing in $\partial \H_E\cong \Bbb R^n$, and is the intersection of $\K_{\P}$ with $\partial \H_E$.  A sphere packing is {\it maximal} or {\it dense} if there is no space in the residual set where one can place another sphere of positive radius.  It is {\it complete} if the residual set is of measure zero.  A packing that is maximal (respectively complete) in one perspective is maximal (complete) in any perspective.

We call a packing {\it of lattice type} if $\P\Bbb Z$ forms a lattice in $\Bbb R^{1,N-1}$ \cite{Max82}.  We call a packing {\it of general lattice type} if  there exists a lattice $\LL\subset R^{1,N-1}$ so that a scalar multiple of $\vc n$ is in $\LL$ for all $\vc n\in \P$, and $\P$ spans $\Bbb R^{1,N-1}$.

A subgroup $\CC \leq O_\LL^+$ is called {\em geometrically finite} if it has a convex fundamental domain with a finite number of faces.  A packing is called {\it crystallographic} if there exists a geometrically finite group $\CC\leq O_{\LL}^+$ and a finite set $S\subset \LL$ so that
\[
\P=\{\cc(\vc n)/|\cc(\vc n)|: \cc\in \CC, \vc n\in S\}.
\]
As the normality condition is not necessary, we will write $\P=\CC(S)$.  The residual set for the packing is the limit set of $\CC$.

Crystallographic packings are not always of lattice type, as was noted by Maxwell.

A crystallographic packing that is maximal is known to be complete.  Furthermore, the Hausdorff dimension of the residual set is strictly less than $N-2$ \cite{Sul84}.

The packings that are known as Boyd-Maxwell packings are crystallographic packings with the restriction that $\CC$ be a reflective group (i.e. is generated by a finite number of reflections).  Kontorovich and Nakamura include this requirement in their definition of a crystallographic packing \cite{KN2019}.

We say a packing has the {\em weak Apollonian property} if it contains a cluster of $N=n+2$ mutually tangent spheres.  We say it has the {\em  Apollonian property} if every sphere is a member of a cluster of $N$ mutually tangent spheres.  
We call a packing {\em Apollonian} if it is crystallographic, maximal, and has the Apollonian property.   

%Finally, we call a crystallographic packing an {\it Apollonian packing} if it contains a subset $\{\vc e_1,...,\vc e_{N}\}$ such that $\vc e_i\cdot \vc e_i<0$ and $(\vc e_i\cdot \vc e_j)^2=(\vc e_i\cdot \vc e_i)(\vc e_j\cdot \vc e_j)$ for all $i\neq j$.  That is, the packing includes a cluster of $N=n+2$ mutually tangent spheres.  

\begin{remark}\label{r2}  In \cite{Bar18}, we give a different definition for an Apollonian packing.  We begin with a set $\{\vc e_1,...,\vc e_{N}\}$ with $\vc e_i\cdot \vc e_i=-1$ and $\vc e_i\cdot \vc e_j=1$ for $i\neq j$ (so a cluster of $N$ mutually tangent spheres), and define the lattice
\[
\LL_N=\vc e_1\Bbb Z\oplus\cdots\oplus \vc e_{N}\Bbb Z.
\]
We pick $D\in \LL_N$ with $D\cdot D>0$ and such that $D\cdot \vc n\neq 0$ for any $\vc n\in \LL_N$ with $\vc n\cdot \vc n=-1$.  (Such a $D$ exists.) We define
\[
\E_{-1}=\{\vc n\in \LL_N: \vc n\cdot \vc n=-1, \vc n\cdot D>0\}.
\]
We define the cone
\[
\K_\rho=\bigcap_{\vc n\in \E_{-1}} H_{\vc n}^+
\]
and the set
\[
\E_{-1}^*=\{\vc n\in \E_{-1}: \hbox{$H_{\vc n}$ is a face of $\K_\rho$}\}.
\]
Then $\P=\E_{-1}^*$.  It is not clear that $\P$ is a packing (since the spheres may intersect), nor that it is dense.  We establish this for $n=4, 5$ and $6$ in \cite{Bar18}, and for $n=7$ and $8$ in \cite{Bar19}.  The packings are crystallographic and of lattice type.   For $n=4, 5$ and $6$, the group $\CC$ can be chosen to be a reflective group \cite{Bar19}.
\end{remark}

\begin{remark}  For a K3 surface $X$, let $\LL=\Pic(X)$ and $\P=\E_{-2}^*$, the set of irreducible $-2$ curves on $X$.  Then $\K_{\P}$ is the ample cone for $X$.  This was the motivation for the definitions given in \cite{Bar18}.

Note that the set $\P$ may not be a packing, since there can be pairs of $-2$ curves that do not intersect or intersect exactly once.  However, using a result of Morrison \cite{Mor84}, there exist plenty of K3 surfaces where this does not happen.  The packings defined in \cite{Bar18} and \cite{Bar19} can all be thought of as coming from K3 surfaces.
\end{remark}

\subsection{Enriques surfaces}\label{sAG}

For a nice introduction to Enriques surfaces, see \cite{Dol16b}.
Let $X$ be an Enriques surface and let $\LL$ be its Picard group modulo torsion.  Then $\LL$ is independent of the choice of $X$, and is sometimes called the Enriques lattice $E_{10}$.  It is an even unimodular lattice of signature $(1,9)$, and can be decomposed as the orthogonal product of the $E_8$ lattice (with negative definite inner product) and the plane $U$ equipped with the Lorentz product $\mymatrix{0&1 \\ 1&0}$: $\LL=E_8\oplus U$.  %We will describe E10 further in the next section.

A nice representation of $E_{10}$ is given by the Coxeter graph $T_{237}$ whose nodes have norm $-2$:
\begin{center}
\includegraphics{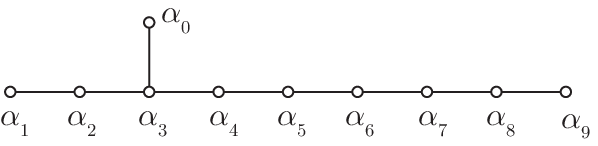}
\end{center}
The subscript in $T_{237}$ means the graph is a tree with three branches of length $2$, $3$, and $7$, as above.  The lattice is $\LL=\aa_0\Bbb Z\oplus \cdots \oplus \aa_9\Bbb Z$.  Since $\aa_i\cdot \aa_i=-2$, we know $\aa_i\cdot \aa_j=1$ if $\aa_i\aa_j$ is an edge of the graph, and $0$ otherwise.  Let $J=[\aa_i\cdot \aa_j]$, so for $\vc x,\vc y\in \LL$ written in this basis, $\vc x\cdot \vc y=\vc x^tJ\vc y$.  Since $J$ has integer entries and $-2$'s along the diagonal, the lattice is even, meaning $\vc x\cdot \vc x$ is even for all $\vc x\in \LL$.  One can verify $\det(J)=1$, so the lattice is unimodular.

The reflections
\[
R_{\aa_i}(\vc x)=\vc x -2\proj_{\aa_i}(\vc x)=\vc x-2\frac{\vc x \cdot \aa_i}{\aa_i\cdot \aa_i}\aa_i=\vc x+(\vc x\cdot \aa_i)\aa_i
\]
have integer entries, so are in $O_\LL^+$.
The inverse of $J$ (which appears in \cite[p.11]{Dol16b}) has non-negative entries along the diagonal, so the polytope bounded by the faces $H_{\aa_i}$ has finite volume.  Thus, the  Weyl group
\[
W_{237}=\langle R_{\aa_0},...,R_{\aa_{9}}\rangle
\]
has finite index in $O_{\LL}^+$.

\ignore{By norm $-2$ we mean $\aa_i\cdot \aa_i=-2$.  Each node represents a lattice point $\aa_i$ in $\LL$, and also a symmetry of the lattice given by reflection through the plane normal to $\aa_i$:
\[
R_{\aa_i}(\vc x)=\vc x -2\proj_{\aa_i}(\vc x)=\vc x-2\frac{\vc x \cdot \aa_i}{\aa_i\cdot \aa_i}\aa_i=\vc x+(\vc x\cdot \aa_i)\aa_i.
\]
If two nodes/vectors $\aa_i$ and $\aa_j$ are joined by an edge, then the angle between them is $2\pi/3$, so $\aa_i\cdot \aa_j=1$ (since $\aa_i\cdot\aa_i=\aa_j\cdot\aa_j=-2$).  Otherwise, the vectors are perpendicular, so $\aa_i\cdot \aa_j=0$.  Thus the Coxeter graph, together with the norms of the nodes, describes the intersection matrix $J=[\aa_i\cdot \aa_j]$.  One can verify that $\det(J)=1$, so $E_{10}=\aa_0\Bbb Z\oplus \cdots  \aa_{9}\Bbb Z$ is an even unimodular lattice.  (By even, we mean for any $\vc x\in E_{10}$, $\vc x\cdot \vc x$ is even.)   One can also check that $J^{-1}$ is the matrix in \cite[p. 11]{Dol16b}.   The group of symmetries for $E_{10}$ is the Weyl group
\[
W_{237}=\langle R_{\aa_0},...,R_{\aa_{9}}\rangle.
\]
It is clear that $W_{237}$ is contained in the group of symmetries of $E_{10}$, since $R_{\aa_i}$ preserves the lattice.  That it is the full group requires an investigation of its fundamental domain.

The lattice $\aa_0\Bbb Z \oplus \cdots \oplus \aa_7\Bbb Z$ is a copy of $E_8$ (with negative definite inner product), so $E_8$ is generated by the Coxeter graph $T_{235}$ with nodes of norm $-2$.
}

A generic Enriques surface has no {\it nodal} curves, meaning it has no smooth rational curves.  The moduli space of Enriques surfaces is ten dimensional.  If $X$ contains a nodal curve $\nu$, then we call it a {\it nodal Enriques surface}.  By the adjunction formula, $\nu\cdot \nu=-2$.  Here we have abused notation by letting $\nu$ represent both the curve on $X$ and its representation in $\LL$.  Since distinct irreducible curves on $X$ have non-negative intersection, an element of $\LL$ represents at most one nodal curve on $X$.  The moduli space of nodal Enriques surfaces is nine dimensional.    The following is a rewording of a portion of \cite[Theorem 1.2]{All18}:

\begin{theorem}[Coble, Looijenga, Cossec, Dolgachev, Allcock] \label{tAllcock}  Suppose $X$ is a generic nodal Enriques surface with nodal curve $\nu$.   Let $\LL$ be its Picard group modulo torsion.  Then there exist $\bb_0,..., \bb_9\in \LL$ so that $\bb_i\cdot\bb_i=-2$ and $\bb_1,...,\bb_9,\nu$ are the nodes of the Coxeter graph (\ref{C2}).  Let
\[
\CC=\langle R_{\bb_0},..., R_{\bb_9}\rangle \cong W_{246}.
\]
Then the image in $\LL$ of all nodal curves on $X$ is the $\CC$-orbit of $\nu$.
\end{theorem}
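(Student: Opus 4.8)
The plan is to recast the statement as one about the nef cone of $X$ inside the positive cone $V=\{\vc x:\vc x\cdot\vc x>0\}$ (equivalently inside $\H=\Bbb H^9$), combining the intrinsic geometry of that cone with the global Torelli theorem for Enriques surfaces. I would take two facts from the theory of Enriques (and K3) surfaces as black boxes. First, by Riemann--Roch and the cone theorem every $\delta\in\LL$ with $\delta\cdot\delta=-2$ has $\pm\delta$ effective, and an effective $(-2)$-class is a non-negative integral combination of nodal curves. Second, the nef cone $\mathrm{Nef}(X)=\{\vc x\in\overline V:\vc x\cdot\delta\geq0\text{ for all nodal }\delta\}$ is a fundamental domain for the nodal Weyl group $W_X^{\mathrm{nod}}=\langle R_\delta:\delta\text{ nodal}\rangle\leq O_\LL^+$; its codimension-one walls are orthogonal to the ``simple'' nodal curves, $W_X^{\mathrm{nod}}$ permutes the set of all nodal classes, and that set is exactly the $W_X^{\mathrm{nod}}$-orbit of the walls. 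The theorem is therefore equivalent to identifying the walls of $\mathrm{Nef}(X)$ and the symmetries permuting them.

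Next I would pin down those symmetries. By the global Torelli theorem for Enriques surfaces (Horikawa, Namikawa) together with the Barth--Peters description of the automorphism group, the image $\Aut^*(X)$ of $\Aut(X)$ in $O_\LL^+$ is, up to finite-index issues governed by the $2$-torsion, the stabilizer of $\mathrm{Nef}(X)$; in particular $\Aut^*(X)$ acts on $\mathrm{Nef}(X)$ and permutes its walls. Let $\CC$ be the group generated by $W_X^{\mathrm{nod}}$ and $\Aut^*(X)$; since the set of nodal classes is invariant under both, $\CC$ preserves it, and a fundamental domain $D$ for $\CC$ is cut from $\mathrm{Nef}(X)$ by the mirrors of $\Aut^*(X)$. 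In this language the single-orbit claim ``$\text{nodal curves}=\CC\cdot\nu$'' reduces to the assertion that $\Aut^*(X)$ acts transitively on the walls of $\mathrm{Nef}(X)$, with $\nu$ any one of them.

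The explicit computation comes next. Starting from the $T_{237}$ presentation $\LL=\aa_0\Bbb Z\oplus\cdots\oplus\aa_9\Bbb Z$ of $E_{10}$ recorded above, and recalling that the full reflection group $W_{237}$ (of finite index in $O_\LL^+$) has the $T_{237}$ Coxeter chamber, I would fix $\nu$ to be a specific $(-2)$-class and run a Vinberg-style edge-finding procedure to list the $(-2)$-classes bounding $\mathrm{Nef}(X)$, reading off $\bb_0=\nu,\bb_1,\dots,\bb_9$ and checking that $\bb_i\cdot\bb_i=-2$ and that the incidence matrix $[\bb_i\cdot\bb_j]$ reproduces the diagram (\ref{C2}). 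Because the $\bb_i$ are linearly independent and the form is nondegenerate of signature $(1,9)$, the resulting linear representation of the Coxeter group of (\ref{C2}) is faithful, so $\CC\cong W_{246}$. The easy inclusion $\CC\cdot\nu\subseteq\{\text{nodal}\}$ is then immediate: $\CC$ preserves the set of nodal classes and $\nu$ is nodal. Concretely, each $R_{\bb_i}$ with $i\geq1$ fixes a wall interior to $\mathrm{Nef}(X)$, hence preserves $\mathrm{Nef}(X)$ and by Torelli lies in $\Aut^*(X)$, carrying $\nu$ to another nodal curve, while $R_{\bb_0}=R_\nu$ merely reverses $\nu$.

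The main obstacle is the reverse inclusion $\{\text{nodal}\}\subseteq\CC\cdot\nu$, equivalently the claim that the wall list above is \emph{complete} and forms a single $\Aut^*(X)$-orbit. This is precisely where the hypothesis that $X$ is a \emph{generic} nodal Enriques surface is indispensable: one must rule out any accidental extra nodal curve on a special member of the nine-dimensional family and verify that the minimal configuration realized generically is exactly $T_{246}$. I would argue this through the period map, using that the nodal locus is a single irreducible divisor in the ten-dimensional moduli space, so that passing to the nodal locus introduces a single new nodal wall $\nu$ into the fundamental chamber $D$ and changes its Coxeter diagram to $T_{246}$; controlling the remaining walls and proving transitivity of $\Aut^*(X)$ on the nodal walls is the content of the Coble--Looijenga argument reproved by Allcock via a direct study of the reflection group of $E_{10}$ and its $2$-congruence subgroup. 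I would follow that route rather than attempt an independent derivation.
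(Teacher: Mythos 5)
You could not have known this, but the paper contains no proof of Theorem~\ref{tAllcock} to compare against: the theorem is stated as a rewording of a portion of \cite[Theorem 1.2]{All18}, with its history (Coble, Looijenga, Cossec--Dolgachev, Allcock) recounted in the introduction, and it is then used as a black box to construct the packing $\P_\bb$. So the real question is whether your proposal constitutes an independent proof, and it does not. At the decisive step --- completeness of the wall list and transitivity of the group on the nodal classes of the generic nodal surface, which is exactly the assertion of the theorem --- you write that this ``is the content of the Coble--Looijenga argument reproved by Allcock'' and that you ``would follow that route rather than attempt an independent derivation.'' The surrounding framework (nef cone as fundamental domain of the nodal Weyl group, Torelli and $\Aut^*(X)$, reduction to transitivity on walls) is a reasonable framing, and is essentially the framing of \cite{CD89} and \cite{All18}; but your reduction terminates in a citation of the very statement being proved, so the proposal is empty where it matters.

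There is also a genuine mathematical error in your first ``black box,'' and it is not peripheral. On an Enriques surface it is false that every $\delta\in\LL$ with $\delta\cdot\delta=-2$ has $\pm\delta$ effective: that is a K3 statement, where $\chi(\O_X)=2$ makes Riemann--Roch give $\chi(\delta)=\tfrac12\,\delta\cdot\delta+2=1>0$. For an Enriques surface $\chi(\O_X)=1$, so $\chi(\delta)=0$ and no effectivity follows. In fact a non-nodal Enriques surface has no effective $(-2)$-classes at all (for an irreducible curve $C$, adjunction gives $C\cdot C=2p_a(C)-2$, so $C\cdot C<0$ forces $C$ nodal), even though $\LL\cong E_{10}$ contains infinitely many $(-2)$-vectors; the same argument shows your premise also fails on the generic \emph{nodal} surface. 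This matters because the premise, if true, would confine $\mathrm{Nef}(X)$ to a single chamber of the arrangement of all $(-2)$-hyperplanes, i.e.\ to a $T_{237}$ chamber of $W_{237}$, whereas the content of the theorem is that only the classes in the single orbit $\CC(\nu)$ are realized by curves, so that the nef cone is the much larger cone $\K_{\P_\bb}$ bounded by that orbit, with $\CC\cong W_{246}$ a proper, congruence-type subgroup of the full reflection group of $E_{10}$. In other words, the question your black box assumes away --- exactly which $(-2)$-classes become effective on the generic nodal surface --- \emph{is} the theorem.
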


\section{The Apollonian packing in eight dimensions}\label{s2}

\begin{theorem}  The packing $\P_\bb=\CC(\nu)$ is Apollonian.
\end{theorem}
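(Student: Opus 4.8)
The plan is to verify, in turn, the three conditions in the definition of an Apollonian packing: that $\P_\bb$ is crystallographic, that it has the Apollonian property, and that it is maximal. The first is essentially immediate. By Theorem~\ref{tAllcock} the group $\CC\cong W_{246}$ is generated by the ten reflections $R_{\bb_0},\dots,R_{\bb_9}$, so it is a reflective subgroup of $O_\LL^+$ whose Coxeter chamber is cut out by the ten walls $H_{\bb_0},\dots,H_{\bb_9}$; hence $\CC$ is geometrically finite. With $S=\{\nu\}$ we have $\P_\bb=\CC(S)$, which is exactly the definition of a crystallographic packing. That $\P_\bb$ is genuinely a packing---its spheres meeting tangentially or not at all---is the assertion that the graph (\ref{C2}) generates a packing, i.e. $\cc(\nu)\cdot\cc'(\nu)\ge 2$ for distinct orbit elements (note $\cc(\nu)\cdot\cc(\nu)=-2$, so $k=2$); I would take this from Maxwell \cite{Max82}, since it is a property of the weight orbit of the diagram rather than of the surface.

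For the Apollonian property I would use that $\CC$ acts transitively on $\P_\bb=\CC(\nu)$, so that it suffices to exhibit a single cluster of ten mutually tangent spheres inside $\P_\bb$: translating such a cluster by a suitable element of $\CC$ then produces a mutually tangent ten-cluster through any prescribed sphere of the packing, with all members again in $\P_\bb$. I would locate the cluster using the affine $E_8$ lattice hidden in the diagram. Deleting the tip of the length-four branch of $T_{246}$ leaves the subdiagram $T_{236}=\tilde E_8$, which generates a parabolic subgroup of $\CC$ fixing a cusp $E$; taking $E$ as the point at infinity puts $\P_\bb$ in the perspective in which an $\tilde E_8$-orbit is the uniradial $E_8$-packing of spheres of radius $1/\sqrt 2$. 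In that perspective a cluster of ten mutually tangent spheres is realized concretely as two parallel bounding hyperplanes together with eight equal spheres whose centers form a regular $7$-simplex of edge length equal to the common diameter. Using $\LL\cong E_8\oplus U$ I would write down the ten vectors of norm $-2$ explicitly and check that their pairwise Lorentz products all equal $2$.

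The one genuinely delicate point in the previous step is that all ten cluster vectors must lie in the single orbit $\CC(\nu)$, not merely have the correct norms and inner products. The eight equal spheres lie in the $\tilde E_8$-orbit and hence in $\P_\bb$, but the two bounding hyperplanes (spheres through $E$) require a separate check. I would establish membership either by exhibiting each cluster vector as $\cc(\nu)$ for an explicit word $\cc$ in the generators, or, more cheaply, by recognizing each as the class of a nodal curve and invoking Theorem~\ref{tAllcock}, which identifies all such classes with the orbit of $\nu$.

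The main obstacle is maximality. The residual set always contains the limit set of $\CC$, and that limit set has Hausdorff dimension strictly below $8$ by \cite{Sul84}, hence measure zero; so maximality is equivalent to the statement that the residual set is no larger than the limit set, i.e. that the packing has no open holes. The natural way to prove this is to promote the Apollonian property to a filling argument: given any nine mutually tangent spheres of a cluster, the Descartes--Apollonius configuration is completed by exactly two further spheres, and in $\P_\bb$ both completions already occur, so that iterating fills every complementary region down to the limit set. Equivalently, one shows that the cone $\K_{\P_\bb}=\bigcap_{\cc\in\CC}H_{\cc(\nu)}^+$ meets $\partial\H_E$ only in the limit set, by analyzing the ideal points of the Coxeter chamber of $\CC$. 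Ruling out an open hole---showing that the recursive completion really exhausts the complement---is the step I expect to require the most care.
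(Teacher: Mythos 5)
Your overall architecture is the same as the paper's: take the packing and crystallographic conditions from Maxwell \cite{Max82}, use transitivity of $\CC$ on $\CC(\nu)$ to reduce the Apollonian property to exhibiting one ten-cluster, and then prove maximality. Two repairable slips occur in your cluster step. First, the configuration you describe (two parallel hyperplanes plus eight equal spheres centered on a regular $7$-simplex) lives at the affine $E_7$ cusp $w_8$ --- the paper's strip version, Theorem \ref{t2.1} --- not at the $\tilde E_8$ cusp: no sphere of $\P_\bb$ passes through the cusp $w_1$ (the paper notes this when treating $w_1$ in its maximality argument), so in that perspective the packing contains no hyperplanes at all; there a ten-cluster consists of nine of the uniradial spheres together with a tenth, smaller sphere (of curvature $3\sqrt2$) inscribed among them. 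Second, invoking Theorem \ref{tAllcock} to certify orbit membership is circular: a norm $-2$ class in $\LL$ need not be the class of a nodal curve (it need not even be effective), so you cannot recognize a cluster vector as nodal without already knowing it lies in $\CC(\nu)$. Your other alternative --- explicit words --- is the right one, and it is exactly what the paper does: $s_0=\nu=2w_9$, $s_1=R_{\bb_9}(s_0)$, $s_{i+1}=R_{\bb_{9-i}}(s_i)$, which makes membership in $\CC(\nu)$ automatic and reduces mutual tangency to an induction (formalized later as Theorem \ref{t3.1}).

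The genuine gap is maximality. Your filling argument needs $\P_\bb$ to be closed under Soddy--Gosset completion: for any nine mutually tangent spheres of a cluster, both completions should again lie in $\P_\bb$. That is false. Work in the strip perspective with $w_8$ at infinity, where the cluster is the two hyperplanes $s_0,s_1$ (curvature $0$) and eight spheres $s_2,\dots,s_9$ of curvature $\sqrt2$. Every sphere $\vc n\in\P_\bb$ in this perspective has curvature $(\vc n\cdot w_8)/\sqrt2\in\tfrac{1}{\sqrt2}\Bbb Z$, since $w_8=[2,1,2,3,4,3,2,1,0,0]$ and $\vc n$ are integer vectors and $J_\bb$ is an integer matrix. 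But the two completions of the nine spheres $\{s_1,s_2,\dots,s_9\}$ have curvatures $0$ (namely $s_0$ itself) and, by the Soddy--Gosset relation $\left(\sum k_i\right)^2=8\sum k_i^2$, curvature $16\sqrt2/7\notin\tfrac{1}{\sqrt2}\Bbb Z$. So the second completion is not in $\P_\bb$ (indeed, since the packing is maximal, it must overlap spheres of the packing), and iterated Descartes completion neither generates nor fills this packing; this breakdown of the naive completion process in dimensions $n\geq 4$ is precisely why such packings were long believed not to exist. The paper proves maximality by Maxwell's Theorem 3.3 instead: one shows every weight $w_i$ lies in the convex closure of $\CC(w_9)$, via explicit identities for $w_0,w_2,\dots,w_9$ and, for the cusp weight $w_1$ (through which no sphere passes), via the limit (\ref{e3}) along the parabolic element $R_{w_9-\bb_9}\circ R_{\bb_9}$. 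Your fallback phrasing --- show that $\K_{\P_\bb}$ meets $\partial\H_E$ only in the limit set by analyzing the ideal points of the chamber --- is in the right spirit, as it is essentially Maxwell's criterion expressed through the weights, but it is exactly the step you leave unexecuted, and the parabolic limit at $w_1$ is where the real work lies.
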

\begin{proof}  From Maxwell \cite{Max82}, we know that this is a Boyd-Maxwell packing, so all we must verify is that it is maximal and that it contains a cluster of ten mutually tangent spheres.  (Because $\CC$ acts transitively on $\P_\bb$, the weak Apollonian property implies the Apollonian property.)

We define $J_\bb=[\bb_i\cdot \bb_j]$, which has $-2$ along the diagonal, $1$ if $\bb_i\bb_j$ is an edge in Coxeter graph (\ref{C2}), and $0$ otherwise.  Taking the inverse,
\[
J_\bb^{-1}=\frac{1}{2}\mymatrix{
5&3&6&9&12&10&8&6&4&2 \\
3&0&2&4& 6& 5&4&3&2&1 \\
6&2&4&8&12&10&8&6&4&2 \\
9&4&8&12&18&15&12&9&6&3 \\
12&6&12&18&24&20&16&12&8&4 \\
10&5&10&15&20&15&12&9&6&3 \\
8&4&8&12&16&12&8&6&4&2 \\
6&3&6&9&12&9&6&3&2&1 \\
4&2&4&6&8&6&4&2&0&0 \\
2&1&2&3&4&3&2&1&0&-1
  },
\]
we find that the weights all have non-negative norm except $w_9$, which gives us $\nu=2w_9=[2,1,2,3,4,3,2,1,0,-1]$.  Since $w_9\cdot \bb_i=\dd_{i9}$, we get $\nu\cdot\bb_9=2$ and $\nu\cdot \bb_i=0$ for $i\neq 9$, as expected.  This is our first sphere, which we label $s_0=\nu=2w_9$.  We get $s_1$ by reflecting $s_0$ across $H_{\bb_9}$:  $s_1=R_{\bb_9}(s_0)$.  We define $s_i$ through $i=9$ recursively by reflecting in subsequent planes: $s_{i+1}=R_{\bb_{9-i}}(s_i)$.  It is straight forward to verify that $s_i\cdot s_j=2$ if $i\neq j$, and of course, $s_i\cdot s_i=\nu\cdot\nu=-2$, so the set $\{s_0,...,s_9\}$ is a cluster of ten mutually tangent spheres.  

To show it is maximal, we can appeal to arguments like those in \cite{Bar18} and \cite{Bar19}.  Let us instead appeal to Maxwell's Theorem 3.3 \cite{Max82}.  By this result, it is enough to show that all weights $w_i$ are in the convex closure of $\CC(w_9)$.  Following Maxwell's example, we note
\[
w_{i-1}=w_i+R_{\bb_i}\cdots R_{\bb_9}w_9=(s_0+...+s_{10-i})/2,
\]
for $5\leq i \leq 9$, so these are all in the convex hull of $\CC(w_9)$.  This leaves us with four more to check, of which $w_1$ is different, as it is a cusp and no spheres go through it.  One can verify that $R_{w_9-\bb_9}\in\CC$, and that 
\begin{equation}\label{e3}
\lim_{k\to \infty}\frac{1}{k^2}(R_{w_9-\bb_9}\circ R_{\bb_9})^k(w_9)=w_1
\end{equation}
so $w_1$ is in the convex closure of $\CC(w_9)$.  This is a messy calculation; we will rationalize why it works in Remark \ref{r5}.  Finally, we note that 
\begin{align*}
w_2&=(s_{10}+R_{\bb_2}(s_8))/2\\
w_0&=w_1+s_9/2 \\
w_3&=R_{\bb_2}(w_2)+R_{\bb_1}(w_1).  
\end{align*}

Thus, $\P_\bb$ is an Apollonian sphere packing.  
\end{proof}

\subsection{The strip version and the $E_7$ lattice}  To better understand this packing, we describe it in a couple of ways.  Let us begin with a strip version, which is an analog of \fref{figApc}.   We can think of \fref{figApc} as an infinite set of circles of constant diameter that are each centered at lattice points of a one-dimensional lattice, then sandwiched between two lines, and filled in using the symmetries of the lattice together with inversion in $\aa_3$ and reflection in $\aa_4$.  We can describe $\P_\bb$ in a similar way:

\begin{theorem}\label{t2.1}  Consider the $E_7$ lattice imbedded in a 7-dimensional subspace $V$ of $\Bbb R^8$.  Centered at each lattice point, place a hypersphere of radius $1/\sqrt{2}$, and bound these hyperspheres by two hyperplanes parallel to $V$ and a distance $1/\sqrt{2}$ away from $V$, so that all the spheres are tangent to the two hyperplanes.  Let the sphere centered at $\vc 0$ be tangent to the two hyperplanes at $A$ and $B$, and let $\ss$ be inversion in the hypersphere centered at $A$ that goes through $B$.  Consider the image of these spheres in the group generated by the symmetries of the $E_7$ lattice, the inversion $\ss$, and refection across the hyperplane $V$.  The resulting configuration of hyperspheres is a perspective of $\P_\bb$.
\end{theorem}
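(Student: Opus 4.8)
The plan is to realize the strip configuration as the perspective $\P_{\bb,E}$ for a suitably chosen cusp $E$, and to recognize the Euclidean symmetry group described in the statement as the stabilizer of $E$ inside $\CC$. The guiding template is the circle-packing discussion around \fref{figApc}: there the strip perspective is $\P_{4,\vc e_1+\vc e_2}$, the point at infinity $E=\vc e_1+\vc e_2$ is a null weight, and the cusp stabilizer is $\langle R_{\aa_1},R_{\aa_2}\rangle\times\langle R_{\aa_4}\rangle$, i.e. the Euclidean group $\tilde A_1$ (lattice translations along the strip) together with the reflection $R_{\aa_4}$ across the midline. Here the analogous cusp is again a weight of norm zero. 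From the diagonal of $J_\bb^{-1}$ the two null weights are $w_1$ and $w_8$; I take $E=w_8$, rescaled to a primitive null vector, and equip $\partial\H_E\cong\Bbb R^8$ with the metric $|\cdot|_E$. (I expect the other cusp $w_1$ to give the full-rank $\tilde E_8$ stabilizer, hence the uniradial $E_8$ perspective mentioned in the introduction, so $w_8$ is the one producing a bounded strip.)

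The key structural step is to identify the cusp stabilizer. Since $w_8\cdot\bb_j=\delta_{8j}$, every reflection $R_{\bb_j}$ with $j\neq 8$ fixes $E$ and so acts on $\Bbb R^8$ as a Euclidean isometry, while $R_{\bb_8}$ moves $E$. I will show that deleting node $\bb_8$ from the Coxeter graph (\ref{C2}) produces the affine diagram $\tilde E_7$ together with a single disjoint node. The $\tilde E_7$ component generates the affine Weyl group $W(E_7)\ltimes E_7$, whose point group is $W(E_7)$ and whose translation sublattice is a scaled copy of the $E_7$ root lattice; the disjoint node contributes one reflection, which I claim is reflection across the hyperplane $V$ carrying the lattice. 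Thus $\langle R_{\bb_j}:j\neq 8\rangle$ is exactly the group generated by the symmetries of the $E_7$ lattice and the $V$-reflection in the statement. Confirming that the deleted-node subdiagram is $\tilde E_7$ disjoint $A_1$ (rather than, say, connected $\tilde E_8$) is a finite check on $J_\bb$.

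With the group pinned down I match the geometry. Take $\nu=2w_9$ as the central sphere (it is the unique weight-plane, of negative norm), compute its curvature by $(\vc n\cdot E)/\|\vc n\|$, and normalize $E$ so that $\nu\cdot E=2$, giving curvature $\sqrt2$ and radius $1/\sqrt2$. Because the cusp stabilizer acts by Euclidean isometries preserving $V$, every translate of $\nu$ under the $\tilde E_7$ translations has the same radius $1/\sqrt2$, and its centers sweep out the $E_7$ lattice in $V$. The two bounding hyperplanes are the two spheres of the packing through $E$ (those $\vc n$ with $\vc n\cdot E=0$, hence of curvature zero): they are interchanged by the $V$-reflection, and a computation with $|\cdot|_E$ places them at distance $1/\sqrt2$ from $V$, tangent to $\nu$ at the antipodal points $A$ and $B$. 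Finally I identify the inversion $\ss$ — inversion in the sphere centered at $A$ through $B$, which is the boundary sphere of the wall $H_{\bb_8}$ (a genuine sphere, since $\bb_8\cdot E\neq 0$) — with the one remaining reflection $R_{\bb_8}\in\CC$. Adjoining $\ss=R_{\bb_8}$ to the cusp stabilizer recovers all ten generators, so the group in the statement equals $\CC$ and the orbit of the displayed spheres is exactly $\CC(\nu)$ seen from $E$, i.e. a perspective of $\P_\bb$.

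The main obstacle is normalization rather than group theory: the statement asserts the honest $E_7$ lattice and the exact radius $1/\sqrt2$, so I must fix the scale of $E$ and of $|\cdot|_E$ so that the translation lattice is \emph{isometric} to $E_7$ (not merely similar) and the radius is exactly right, instead of correct only up to an overall similarity. Pinning these constants down, verifying that the $\tilde E_7$ translations generate $E_7$ with the correct covolume, and checking that $\ss$ is precisely $R_{\bb_8}$ so that no extra generator is hidden, is where the real work lies; the remaining items (tangency of the two bounding planes to $\nu$ and the locations of $A$ and $B$) are routine once coordinates on $\partial\H_E$ are chosen.
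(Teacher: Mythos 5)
Your structural skeleton matches the paper's proof: the paper also takes $w_8$ as the point at infinity, identifies the stabilizer generators $R_{\bb_0},\dots,R_{\bb_7}$ (the $T_{244}=\tilde E_7$ subdiagram) as producing the $E_7$ translations and point symmetries, identifies $R_{\bb_9}$ as reflection across $V=H_{\bb_9}$, and identifies the remaining generator $R_{\bb_8}$ with the inversion $\ss$. (The paper pins down the translation lattice concretely, via $\bb_i'=2w_8-\bb_i$, the tangent-parallel walls, and a descent argument showing $R_{\bb_i'}\in\CC_7$, rather than quoting the general structure of affine Weyl groups; your worry about normalization is handled there automatically, since the translations $R_{\bb_i'}\circ R_{\bb_i}$ are by the actual root vectors.)

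However, your geometric matching contains a genuine error that breaks the middle of the argument. You take $\nu=2w_9$ as the central sphere and propose to ``normalize $E$ so that $\nu\cdot E=2$.'' This is impossible: from $J_\bb^{-1}$ one reads off $w_9\cdot w_8=0$, so $\nu\cdot w_8=0$ identically, and no rescaling of $E=w_8$ changes that. In the $w_8$-perspective $\nu$ has curvature $(\nu\cdot w_8)/\|\nu\|=0$; it is not a sphere of radius $1/\sqrt2$ but one of the two \emph{bounding hyperplanes}. Indeed $\nu+\bb_9=w_8$, so $H_\nu$ is parallel to $V=H_{\bb_9}$ and tangent to it at the point at infinity; in the paper's notation $s_0=\nu$ and $s_1=R_{\bb_9}(\nu)$ are the two hyperplanes, with $4w_8=s_0+s_1$. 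Your subsequent claims then collapse: the $\tilde E_7$ translations are parallel to $V$, hence fix the hyperplane $H_\nu$ pointwise as a set, so the ``translates of $\nu$'' cannot sweep out spheres centered at the $E_7$ lattice points. The correct central sphere is $s_2=R_{\bb_8}(R_{\bb_9}(\nu))$, which satisfies $s_2\cdot w_8\neq 0$, is perpendicular to $H_{\bb_0},\dots,H_{\bb_6}$, and has its center $O$ at the origin; the uniradial family is the $\CC_7$-orbit of $s_2$, and $A$, $B$ are the tangency points of $s_2$ with $s_0$ and $s_1$, which is what makes $R_{\bb_8}$ (sending $s_1$ to $s_2$) the inversion $\ss$. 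With $s_2$ substituted for $\nu$ throughout your third paragraph, your argument becomes essentially the paper's.
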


\begin{remark}  The $E_7$ lattice has the Coxeter diagram $T_{234}$:
\[
\includegraphics{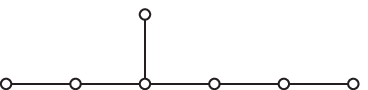}
\]
It is common to assign each node the norm $2$ so that the resulting bilinear form has integer entries, is even, and has determinant $2$.  Thus, the generating vectors have length $\sqrt 2$, which is why the spheres above have radius $\sqrt 2/2$.
\end{remark}

\begin{proof}  Referring to $J_\bb^{-1}$, we note that $w_8$ has norm zero, so is a point on $\partial \H$.  The plane $H_{\bb_9}$ goes through it, as do $H_{\bb_i}$ for $i=0,...,7$, while the latter are all perpendicular to $H_{\bb_9}$.  We let $w_8$ be the point at infinity, so $w_8$ is the analog of $E$ in \fref{figApc}.  We think of $\bb_9$ as the analog of $\aa_4$, so $V=H_{\bb_9}$.  The two spheres $s_0$ and $s_1$ also go through $w_8$ (so are the two hyperplanes) and indeed $4w_8=s_0+s_1$, so it is the point of tangency of these two spheres.

The planes $H_{\bb_i}$ for $i=0,...,6$ are all perpendicular to the sphere $s_2$, so let us think of the center $O$ of $s_2$ as the origin for $\Bbb R^8\cong \partial \H_{w_8}$.  We consider the group $\GG_7=\langle R_{\bb_0},...,R_{\bb_7}\rangle$, and set $\LL_7$ to be the image of $O$ under the action of $\GG_7$.

The portion of the Coxeter graph (\ref{C2}) with nodes $\bb_0,...,\bb_6$ is the Coxeter graph $T_{234}$ of the $E_7$ lattice, so it should be no surprise that $\LL_7$ is the $E_7$ lattice.  The group $\CC_7\cong W_{244}$ acts transitively on the $E_7$ lattice.  This is no doubt a well known result, but it is not hard to verify:  We note that the plane given by $\bb_i'=2w_8-\bb_i$  (for $0\leq i\leq 6$) is parallel to $H_{\bb_i}$ (since $\bb_i'\cdot \bb_i'=-2$ and $\bb_i'\cdot \bb_i=2$), and is tangent to $s_2$ (since $s_2\cdot \bb_i'=2$).  Thus the composition $R_{\bb'_i}\odot R_{\bb_i}$ is translation in the direction of $\bb_i$.  These translations, acting on $O$, generate the $E_7$ lattice.  To see that it is a subset of $\LL_7$, we show that $R_{\bb_i'}\in \CC_7$, which we do using a method of descent.

Our method of descent is as follows:  Given a vector $\vc n$, we descend to $R_{\bb_j}(\vc n)$ if $\vc n\cdot \bb_j<0$ (for $j\leq 7$).  Each $\bb_k'$ descends to a $\bb_i$ with $0\leq i\leq 7$, so $R_{\bb_k'}\in \CC_7$.

We note that $R_{\bb_i}$ fixes the $E_7$ lattice for $i\leq 7$, so $\LL_7$ is the $E_7$ lattice.

Finally, the generators of $\CC$ are $R_{\bb_8}$, $R_{\bb_9}$, and the generators of $\CC_7$.  The map $R_{\bb_9}$ is reflection across the hyperplane $V$.  The reflection $R_{\bb_8}$ is $\ss$, as it sends the hyperplane $s_1$ to the sphere $s_2$, so is inversion in the sphere through the point of tangency of $s_2$ and $s_1$, and centered at the point of tangency of $s_2$ and $s_0$.  Thus, the described packing is $\P_{\bb,w_8}$.
\end{proof}

\subsection{The $E_8$ structure}
\begin{theorem}
Consider an arrangement of hyperspheres all of radius $1/\sqrt 2$ and centered at the lattice points of the $E_8$ lattice in $\Bbb R^8$.  (This is the densest uniradial sphere packing in $\Bbb R^8$.)  In this configuration, there is a tight cluster of nine spheres such that there exists a sphere of radius $1/\sqrt 2$ (not in the arrangement)  that is perpendicular to all nine spheres.  Let $\ss$ be inversion in this last sphere.  Consider the image of this arrangement of spheres in the group generated by $\ss$ and the symmetries of the $E_8$ lattice.  The resulting configuration is a perspective of $\P_\bb$.
\end{theorem}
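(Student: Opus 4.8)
The plan is to run the argument of Theorem~\ref{t2.1} with the other cusp. From $J_\bb^{-1}$ the weight $w_1$ has norm $0$, so it is a point of $\partial\H$; I would take $E=2w_1$ as the point at infinity. Since $s_0=2w_9$ and $w_9\cdot w_1=\tfrac12$, we have $s_0\cdot w_1=1$, and because $w_1\cdot\bb_i=\delta_{1i}$ the $w_1$-product is unchanged by every $R_{\bb_i}$ with $i\neq1$. As the chain $s_0,\dots,s_9$ is built by reflecting in $\bb_9,\bb_8,\dots,\bb_1$ in turn, it follows that $s_i\cdot w_1=1$, i.e.\ $s_i\cdot E=2$, for $i=0,\dots,8$. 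Together with $s_i\cdot s_i=-2$ this gives each of $s_0,\dots,s_8$ radius $1/\sqrt 2$, and $s_i\cdot s_j=2$ makes them mutually tangent: these nine equal spheres, arranged as a regular $8$-simplex, are the advertised tight cluster. The same computation gives $\bb_1\cdot E=2$, so the mirror $H_{\bb_1}$ is itself a sphere of radius $1/\sqrt 2$, and $\ss:=R_{\bb_1}$ is inversion in it.

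The heart of the proof is to recognize the symmetries of $E_8$ inside $\CC$. Deleting the node $\bb_1$ (the tip of the branch of length four) turns the graph~(\ref{C2})$\,=T_{246}$ into $T_{236}$, the affine diagram $\tilde E_8$; thus $\langle R_{\bb_i}:i\neq1\rangle\cong W_{236}$ is the affine Weyl group of $E_8$. As in Theorem~\ref{t2.1}, I would produce its translations explicitly: for each node $\bb_i$ of the finite $E_8$ subdiagram the parallel mirror $\bb_i'=2w_1-\bb_i$ satisfies $\bb_i'\cdot\bb_i'=-2$ and $\bb_i'\cdot\bb_i=2$, so $R_{\bb_i'}\circ R_{\bb_i}$ is a translation, and a descent argument of the kind used there places each $R_{\bb_i'}$ in $\langle R_{\bb_j}:j\neq1\rangle$. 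These translations generate the $E_8$ root lattice, which, being unimodular, is the whole $E_8$ lattice, so $\langle R_{\bb_i}:i\neq1\rangle$ acts transitively on the lattice points and the orbit of one radius-$1/\sqrt 2$ sphere is exactly the uniradial $E_8$ packing.

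It remains to match the geometric data in the statement. Orthogonality of a lattice sphere $\vc n$ to $H_{\bb_1}$ is the condition $\bb_1\cdot\vc n=0$, equivalently that the two radius-$1/\sqrt 2$ spheres have centers at distance $1$; so the center $c$ of $H_{\bb_1}$ lies at distance $1$ from each perpendicular lattice point, and since the covering radius of $E_8$ is $1$, $c$ is a deep hole. Its nearest lattice points form the tight cluster of nine perpendicular spheres (eight of them are $s_0,\dots,s_7$, which one checks satisfy $\bb_1\cdot s_i=0$), the count nine reflecting the nine nodes of $\tilde E_8$. Finally, since $\CC=\langle R_{\bb_0},\dots,R_{\bb_9}\rangle=\langle\ss,\ \langle R_{\bb_i}:i\neq1\rangle\rangle$, the group generated by $\ss$ and the $E_8$ symmetries is all of $\CC$; its action on the uniradial packing therefore produces $\CC(\nu)$ in the $E=2w_1$ perspective, i.e.\ the perspective $\P_{\bb,2w_1}$ of $\P_\bb$.

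I expect the second paragraph to be the main obstacle: showing that $\langle R_{\bb_i}:i\neq1\rangle$ is genuinely the full symmetry group of $E_8$ acting transitively requires carrying out the descent that places every parallel mirror $R_{\bb_i'}$ in the group, exactly as in Theorem~\ref{t2.1}. The only genuinely new bookkeeping is confirming the deep-hole combinatorics—that the center of $H_{\bb_1}$ has precisely nine nearest lattice points—while everything else reduces to finite Lorentz-product computations once $2w_1$ and $\bb_1$ have been singled out.
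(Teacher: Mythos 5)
Your overall route is the paper's own: take $w_1$ (your $E=2w_1$) as the point at infinity, identify $\langle R_{\bb_i}:i\neq 1\rangle\cong W_{236}$ as the affine Weyl group of $E_8$ via the parallel mirrors $\bb_i'$ and a descent argument, and note that $\CC$ is generated by this group together with $\ss=R_{\bb_1}$. But there is a genuine error at the step where you identify the tight cluster. In your first paragraph you declare the cluster to be the nine mutually tangent spheres $\{s_0,\dots,s_8\}$ and assert that $H_{\bb_1}$ is perpendicular to all of them. It is not: since $s_8=\nu+2(\bb_9+\bb_8+\cdots+\bb_2)$ and $\bb_1$ is joined only to $\bb_2$ in the graph, we get $\bb_1\cdot s_8=2\,\bb_1\cdot\bb_2=2$, so $H_{\bb_1}$ is \emph{tangent} to $s_8$, not perpendicular. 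In fact no sphere of radius $1/\sqrt 2$ can be perpendicular to nine mutually tangent spheres of radius $1/\sqrt 2$: perpendicularity between two spheres of radius $1/\sqrt 2$ forces their centers to be at distance exactly $1$, so such a sphere would have to be centered at the circumcenter of the regular $8$-simplex of edge $\sqrt 2$ formed by the nine centers, and that simplex has circumradius $2\sqrt 2/3\neq 1$. This is exactly why the paper replaces $s_8$: it sets $s_{10}=R_{\bb_0}(s_6)$, checks $\bb_1\cdot s_{10}=0$, and takes the cluster to be $\{s_0,\dots,s_7,s_{10}\}$ --- which, note, is \emph{not} mutually tangent (one computes $s_7\cdot s_{10}=6$); the ``tight cluster'' of the statement is a weaker notion than a cluster of mutually tangent spheres.

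Your third paragraph implicitly concedes the problem (the cluster becomes $s_0,\dots,s_7$ plus an unnamed ninth sphere) and tries to finish with deep-hole combinatorics, asserting that the center of $H_{\bb_1}$ has precisely nine nearest lattice points, ``reflecting the nine nodes of $\tilde E_8$.'' That count is wrong: the deep holes of $E_8$ (covering radius $1$ in this normalization) are centers of cross-polytope Delaunay cells and have \emph{sixteen} nearest lattice points; the paper's nine spheres are nine of those sixteen, the centers of $s_7$ and $s_{10}$ forming an antipodal pair at distance $2$. Moreover, knowing that the perpendicular lattice points lie at distance $1$ from the center of $H_{\bb_1}$ does not by itself make that center a deep hole --- you would still need to rule out lattice points at distance less than $1$. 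So the cluster cannot be produced by this counting argument; it has to be exhibited by a direct computation such as the paper's choice of $s_{10}$, and as written your proof never produces nine spheres to which $H_{\bb_1}$ is actually perpendicular.
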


\begin{proof}  Referring to $J_\bb^{-1}$, we see that $w_1$ is also on $\partial \H$, as $w_1\cdot w_1=0$.  Let us pick $w_1$ as our point at infinity.  Then the planes $H_{\bb_i}$ for $i\neq 1$ all go through $w_1$ (by the definition of $w_1$), so the corresponding reflections $R_{\bb_i}$ are Euclidean symmetries of $\Bbb R^8\cong \partial H_{w_1}$.
Note that $\bb_i\cdot \nu=\bb_i\cdot s_0=0$ for $i\neq 1,9$.  We choose the center $O$ of $s_0$ for the origin of $\Bbb R^8$.  Note that the subgraph of (\ref{C2}) with nodes $\{\bb_0, \bb_2,...,\bb_8\}$ is the $T_{235}$ graph, which is the Coxeter graph for the $E_8$ lattice.

Let $\CC_8=\langle R_{\bb_0},R_{\bb_2},...,R_{\bb_9}\rangle$ and set $\LL_8=\CC_8(O)$.  Then $\CC_8\cong W_{236}$, so $\LL_8$ is the $E_8$ lattice.  This is a better known result, but is also easy to verify using the same ideas as in the proof of Theorem \ref{t2.1} (using $\bb_i'=w_1-\bb_i$ for $i\neq 1$).  The image $\CC_8(s_0)$ is a uniradial sphere packing and is the arrangement referred to in the statement of the Theorem.

The group $\CC$ is generated by $\CC_8$ and the reflection $R_{\bb_1}$, which is an inversion on $\Bbb R^8$.  The spheres $\{s_0,...,s_8\}$ are all in $\CC_8(s_0)$, and $\bb_1$ is perpendicular to all but $s_8$.  Let $s_{10}=R_{\bb_0}(s_6)$.  Then $\bb_1$ is perpendicular to the nine spheres $\{s_0,s_1,...,s_7,s_{10}\}$.  This is the tightly clustered set of spheres referred to in the statement of the theorem.  Note that $\bb_1\cdot w_1=2=\nu \cdot w_1$, so the radii of $H_{\bb_1,w_1}$ and $H_{\nu,w_1}$ are the same.   Thus, $R_{\bb_1}$ is inversion in a sphere of radius $1/\sqrt 2$ in $\partial \H_{w_1}$.

Thus, the described configuration is congruent to the perspective $\P_{\bb,w_1}$.
\end{proof}

\begin{remark}\label{r5}  It is clear that $w_1$ is a limit point of the spheres on the $E_8$ lattice.  The limit in Equation (\ref{e3}) is the limit of spheres translated along the $\bb_9$ direction.
\end{remark}

\begin{remark}  Note that the above shows that there is only one type of hole in the $E_8$ uniradial sphere packing.  Compare this to the usual cannon-ball sphere packing in $\Bbb R^3$, where there are two types of holes:  One in a tetrahedral configuration of spheres, and another in an octahedral configuration of spheres.
\end{remark}

\subsection{Comparison with the packing in \cite{Bar18}}
Let
\begin{align*}
\LL_{\bb}&=\bb_0\Bbb Z\oplus \cdots \oplus \bb_9\Bbb Z \\
\LL_{10}&=s_0\Bbb Z\oplus \cdots \oplus s_9\Bbb Z
\end{align*}
and let $J_{10}=[s_i\cdot s_j]$.  Then $\det(J_{10})=-2^{22}$ while $\det(J_\bb)=-4$.  It would appear that the packing $\P_{10}=\E_{-2}^*$ generated by $\LL_{10}$ (see Remark \ref{r2}) is very different than $\P_\bb$.  However, we can modify the underlying lattice for $\P_\bb$ in the following way.  Consider the sublattice
\[
\LL_{\bb}'=2\bb_0\Bbb Z\oplus \cdots \oplus 2\bb_8\Bbb Z\oplus \nu\Bbb Z
\]
and its incidence matrix $J_{\bb}'$.
Then $\LL_{10}\subset \LL_{\bb}'\subset \LL_{\bb}$ and $\det(J_{\bb}')=-2^{20}$.  Thus $\LL_{10}$ is a sublattice of $\LL_{\bb}'$ of index two. Though we have changed the underlying lattice for $\P_\bb$, we have not changed its geometry.  Furthermore, in this basis, $\P_\bb=\E_{-2}^*$.  Thus, $\K_{\P_{10}}\supset \K_{\P_\bb}$, so the residual set for the packing $\P_\bb$ is a subset of the residual set for $\P_{10}$.  Thus, $\P_\bb$ is a more {\it efficient} packing than $\P_{10}$.

\begin{remark}  There exists a K3 surface $X$ with $\Pic(X)=\LL_{\bb}'$, so we can think of $\P_\bb$ as the ample cone for some K3 surface.
\end{remark}

\section{The Apollonian property and Maxwell's paper}\label{s3}

In this section, we identify the relevant property of the Coxeter graph (\ref{C2}) that implies the existence of a maximal cluster of mutually tangent spheres, and use this to identify the Apollonian sphere packings in Maxwell's paper \cite{Max82}.

\begin{theorem} \label{t3.1} Let $\P=\CC(S)$ be a crystallographic sphere packing.  Suppose there exists an element $\nu$ of $\P$ and a reflective subgroup of $\CC$ that generates a Coxeter graph of the form $T_{k}$ with $\nu$ attached to one of the ends by a bold edge.  Then $\P$ includes a cluster of $k+1$ mutually tangent spheres.
\end{theorem}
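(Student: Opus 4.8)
The plan is to generalize the explicit construction carried out in the proof of the eight-dimensional packing theorem, where a ten-sphere cluster was produced by reflecting $\nu$ successively through the planes of a length-nine chain. Label the nodes of the chain $T_k$ by $\bb_1,\dots,\bb_k$, normalized so that $\bb_i\cdot\bb_i=-2$, with $\bb_i\cdot\bb_{i+1}=1$ for consecutive nodes and $\bb_i\cdot\bb_m=0$ whenever $|i-m|\geq 2$. Suppose $\nu$ is attached to the end $\bb_k$; the bold edge then records the parallel (tangent-at-infinity) relation, which for norm $-2$ vectors means $\nu\cdot\bb_k=2$, while $\nu\cdot\bb_i=0$ for $i<k$ because $\nu$ meets no other node of the chain. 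All the reflections used below lie in $\CC$ and $\nu\in\P$, so every vector produced lies in the $\CC$-orbit $\CC(\nu)\subseteq\P$ and hence represents a sphere of $\P$; it therefore remains only to exhibit the tangency relations among them.

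Next I would set $s_0=\nu$ and recursively $s_j=R_{\bb_{k-j+1}}(s_{j-1})$ for $j=1,\dots,k$, walking down the chain from $\bb_k$ to $\bb_1$ exactly as in the main proof. Using the norm $-2$ reflection formula $R_{\bb_m}(\vc x)=\vc x+(\vc x\cdot\bb_m)\bb_m$, the whole computation reduces to the single inductive claim $s_{j-1}\cdot\bb_{k-j+1}=2$, which gives $s_j=s_{j-1}+2\bb_{k-j+1}$ and hence the closed form
\[
s_j=\nu+2\sum_{l=k-j+1}^{k}\bb_l .
\]
The claim follows because in the pairing $s_{j-1}\cdot\bb_{k-j+1}$ the only surviving contribution comes from the adjacent node $\bb_{k-j+2}$ (or, in the base case $j=1$, from the bold edge $\nu\cdot\bb_k=2$): all other terms vanish by orthogonality within the chain and by $\nu\cdot\bb_i=0$ for $i<k$.

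I would then verify mutual tangency by a direct computation from the closed form. Writing $A_j=\bb_{k-j+1}+\dots+\bb_k$, each block sum satisfies $A_j\cdot A_j=-2$ (the $j$ diagonal terms contribute $-2j$ and the $j-1$ adjacent pairs contribute $+2(j-1)$), and $\nu\cdot A_j=\nu\cdot\bb_k=2$. For $i<j$ the blocks are nested, and the only adjacency between the nodes of $A_i$ and the extra nodes $\bb_{k-j+1},\dots,\bb_{k-i}$ of $A_j$ is the single edge joining $\bb_{k-i}$ to $\bb_{k-i+1}$, so $A_i\cdot A_j=A_i\cdot A_i+1=-1$. Substituting into $s_i\cdot s_j=\nu\cdot\nu+2\nu\cdot A_i+2\nu\cdot A_j+4A_i\cdot A_j$ gives $-2+4+4-4=2$, while $s_i\cdot s_i=-2$ since reflections preserve the norm. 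Hence $\{s_0,\dots,s_k\}$ is a cluster of $k+1$ mutually tangent spheres in $\P$.

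All the computations here are elementary, so the only real subtlety — and the point I would watch most carefully — is the combinatorial bookkeeping that makes the inductive step and the block-overlap count come out uniformly. Everything hinges on $\nu$ being attached to an \emph{end} of the chain, so that $\nu$ is orthogonal to every interior and far node, and on the bold edge contributing exactly the value $2$. If $\nu$ attached instead to an interior node, or by an ordinary rather than a bold edge, the clean recursion $s_j=s_{j-1}+2\bb_{k-j+1}$ and the resulting constant inner product would both break down.
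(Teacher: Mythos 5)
Your proposal is correct and is essentially the paper's own proof: both construct the cluster by setting $s_0=\nu$ and reflecting successively through the planes of the chain, yielding the recursion $s_j=s_{j-1}+2\bb_{k-j+1}$. The only difference is organizational—the paper verifies the tangencies $s_i\cdot s_j=2$ by an induction on four intertwined statements (whose details it leaves to the reader), while you solve the recursion into the closed form $s_j=\nu+2A_j$ and check the same inner products directly (noting that the case $i=0$, omitted from your block-overlap count, follows at once from $\nu\cdot A_j=2$ with $A_0$ the empty sum).
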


\begin{proof}  Let us label the graph $T_k$ and $\nu$ as follows:
\[
\includegraphics{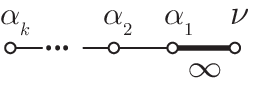}
\]
and set $\aa_i\cdot \aa_i=\nu\cdot \nu=-2$.  Let $s_1=\nu$ and $s_{i+1}=R_{\aa_i}(s_i)$ for $i=1, ... , k$.  Then $s_i\cdot s_i=-2$ for all $i$.  To show the set $\{s_1,...,s_{k+1}\}$ is a cluster of $k+1$ mutually tangent spheres, we need to show $s_i\cdot s_j=2$ for $i\neq j$.  We prove this using induction on the following statements:
(a) $s_i=s_{i-1}+2\aa_{i-1}$;
(b) $\aa_i\cdot s_i=2$;
(c) $\aa_j\cdot s_i=0$ for $j>i$; and (d)  $s_i\cdot s_j=2$ for $i<j$.
We leave the details to the reader.  
\end{proof}

While this result may not be surprising, and may even be obvious, it nevertheless seems to have escaped any serious notice.  With this result in mind, we look at Maxwell's Table II \cite{Max82} and Chen and Labb\'e's Appendix \cite{CL15} in search of candidates for Apollonian packings.  We find twelve candidates:  The one in the introduction and the eleven listed in \fref{fig2b}.
\myfig{fig2b}{The eleven other graphs in \cite{CL15} that satisfy the conditions of Theorem \ref{t3.1} for $k=N-1$.}
The hollow node indicates that removing it yields the desired subgraph.  The $+$ is Maxwell's notation to indicate that the weight at that point is a plane.  We have modified his notation a bit, using $+_\infty$ to indicate that the weight is a plane that is parallel to its associated node, while $+$ indicates that the plane is ultraparallel to its associated node.

Maxwell notes that the first four graphs for $N=5$ yield the same sphere packing \cite[Table I]{Max82}, which is the Soddy sphere packing \cite{Sod37}.  The two graphs in $N=6$ also give the same packing.  The packings for $N=6$, $7$, and $8$ are the subject of \cite{Bar18}, and their Coxeter graphs are given in \cite{Bar19}.  The graphs for $N=9$ and $N=11$ are the subjects of the next two sections.

The last graph for $N=5$ is a pleasant surprise, as it shows that there are different ways of filling in the voids of an initial configuration of five mutually tangent spheres in $\Bbb R^3$, yet still get a sphere packing where every sphere is a member of a cluster of five mutually tangent spheres.  The packing is Apollonian, but not lattice like.  Unlike the Soddy packing, there is no perspective where all the spheres have integer curvature.  

Let us label the Coxeter graph as follows:
\begin{equation}\label{CoxMax}
\includegraphics{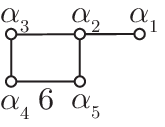}
\end{equation}
so
\[
J_\aa=\mymatrix{-2&1&0&0&0 \\ 1&-2&1&0&1 \\ 0&1&-2&1&0 \\ 0&0&1&-2&\sqrt 3 \\ 0&1&0&\sqrt 3&-2}.
\]
We let $s_1=w_1$, $s_2=R_{\aa_1}(s_1)$, ..., $s_5=R_{\aa_4}(s_4)$.  We note $w_2=s_1+s_2$ and let it be the point at infinity, giving us a strip version of the packing.  The cross section on the plane $\aa_1$ is shown in \fref{fig7}, and the cross section on the plane $\aa_4$ is shown in \fref{fig8}

\begin{figure}
\includegraphics[height=7.4cm]{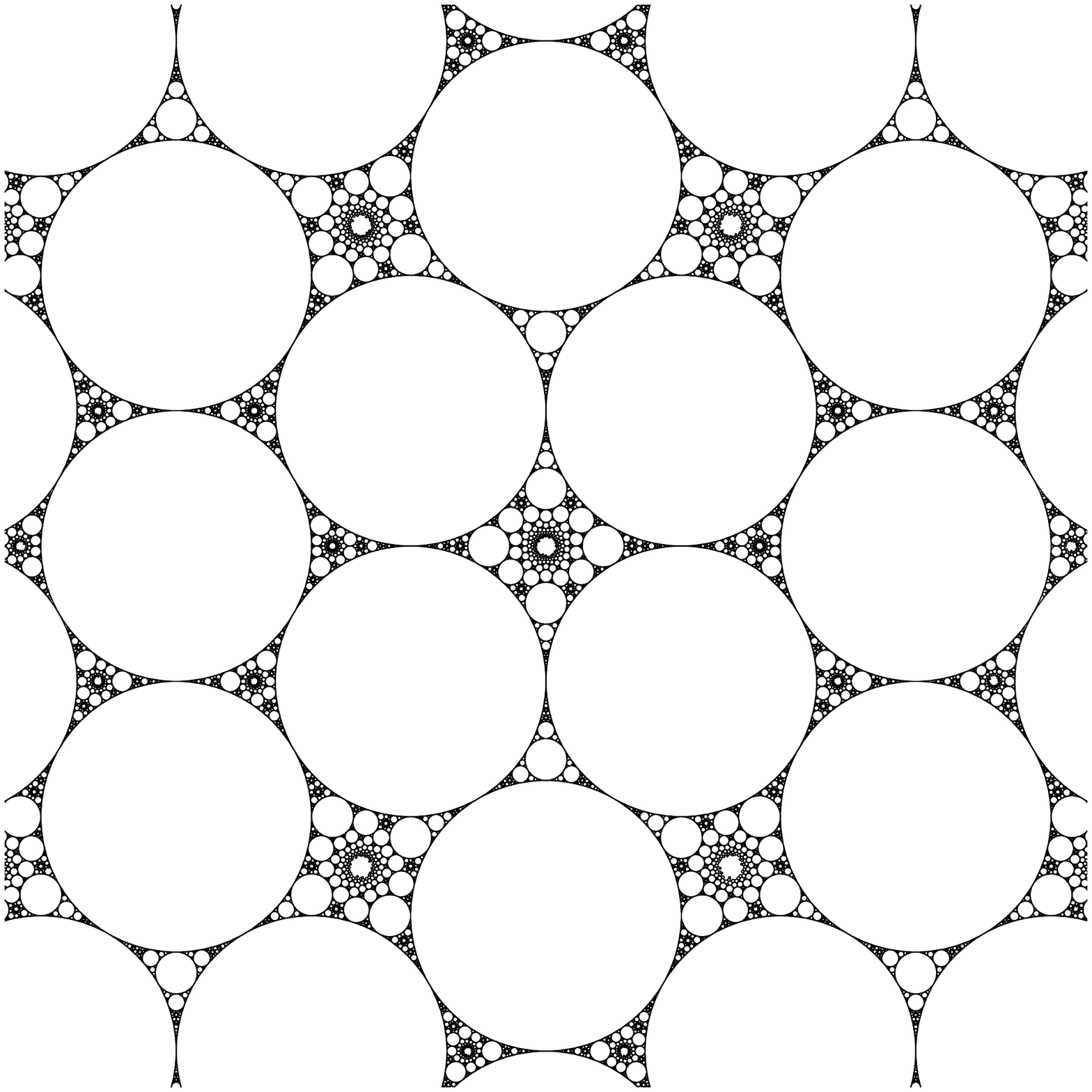}

\vspace{-7.43cm}\hspace{-.1cm}
\includegraphics[height=7.45cm]{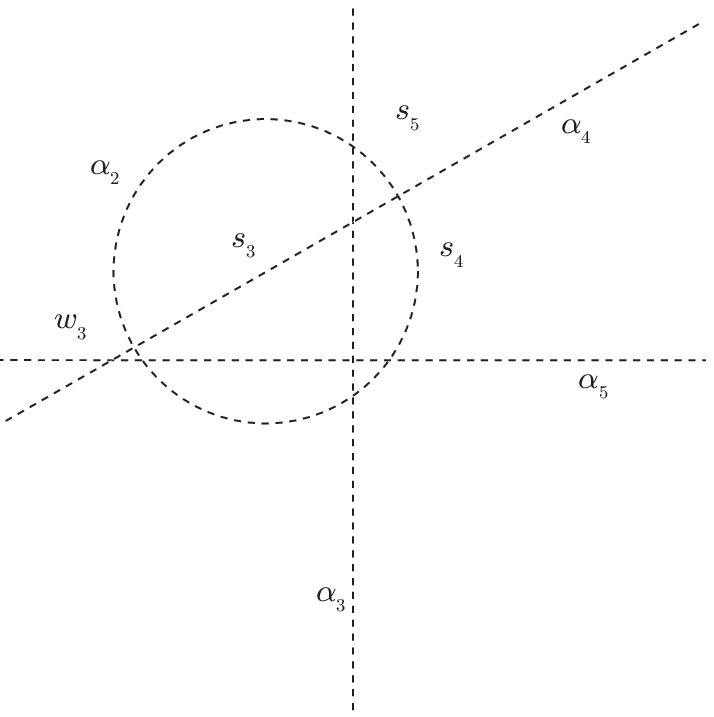}
\caption{\label{fig7}  The horizontal cross section of the strip version of the non-Soddy Apollonian sphere packing with Coxeter graph (\ref{CoxMax}).  The dotted lines represent the symmetries.  Note that the dotted circle represents inversion in a sphere that intersects this plane at an angle of $\pi/3$.  This picture \fref{fig8} were generated using McMullen's Kleinian groups program \cite{McM}.}
\end{figure}

\begin{figure}
\includegraphics[width=\textwidth]{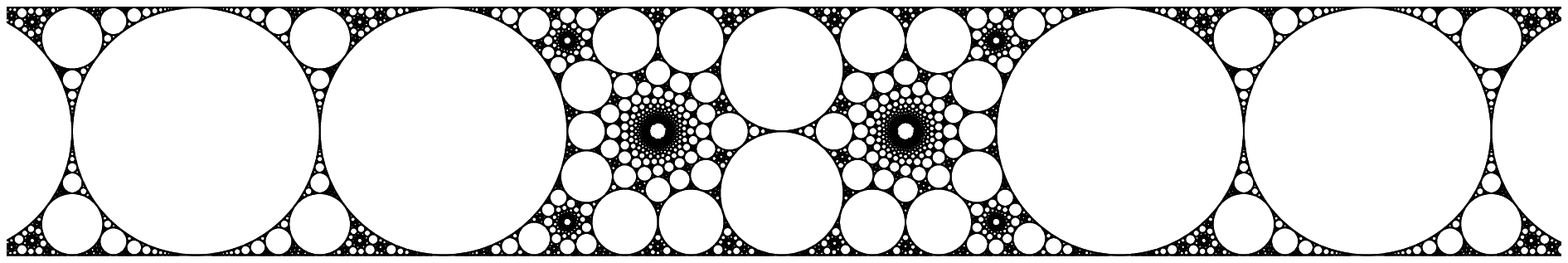}

\vspace{-2.09cm}
\includegraphics[width=\textwidth]{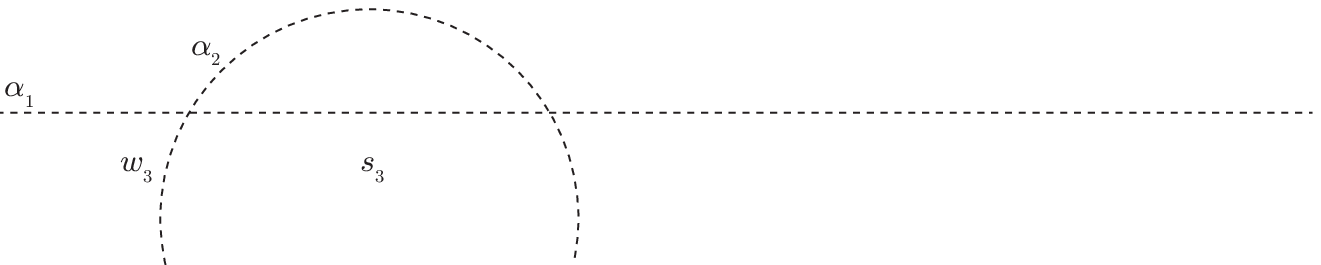}
\caption{\label{fig8}  The cross section on the plane $H_{\aa_4}$ of the strip version of the non-Soddy Apollonian sphere packing.  As a circle packing, this cross section is weakly Apollonian.  }
\end{figure}

\begin{remark}  If we invert the strip packing in the sphere $w_3$ and scale to get a sphere of curvature $-1$, then the planes $s_1$ and $s_2$ become spheres with curvature $2$ tangent at the center of the sphere $w_3$.  The six spheres surrounding $w_3$ become a {\it hexlet} of spheres with curvature $3$, just like those in the sphere packing described by Soddy \cite{Sod37}.  How the space between these spheres is filled in, though, is different from how Soddy does it.  The spheres $s_3$ and $s_4$, in this perspective, have curvature $7+4\sqrt 3$.
\end{remark}

\section{A cross section in $\Bbb R^7$}  \label{s4}

Given a sphere packing in dimension $n$, a codimension one cross section is a sphere packing in dimension $n-1$.  If the sphere packing contains a cluster of $n+2$ mutually tangent spheres (i.e.~has the Apollonian property), then by choosing a cross section perpendicular to $n+1$ of these spheres, we get a sphere packing in one lower dimension that has (at least) the weak Apollonian property.

\begin{theorem}  A cross section perpendicular to nine spheres in a cluster of ten mutually tangent spheres in $\P_\bb$ yields the sphere packing in $\Bbb R^7$ with Coxeter graph labeled $N=9$ in \fref{fig2b}.
This packing is of general lattice type.
\end{theorem}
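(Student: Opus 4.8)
The plan is to realize the cross section concretely as a hyperbolic hyperplane $H_{\vc m}\cong\Bbb H^8$ inside $\H=\Bbb H^9$ and to identify the reflection group it inherits. By the symmetry of the cluster it suffices to drop $s_0=\nu$ and take $\vc m$ to be the (up to scale unique) vector with $\vc m\cdot s_i=0$ for $i=1,\dots,9$; the remaining cases are analogous. Solving the linear system with $s_i\cdot s_j=2-4\delta_{ij}$ gives $\vc m=\sum_{i=0}^{9}s_i-8s_0$, and since $\bb_l\cdot\sum_i s_i=8\delta_{l0}$ one sees $\sum_i s_i=8w_0$, so $\vc m=8(w_0-\nu)\in\LL_\bb$. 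A direct computation gives $\vc m\cdot\vc m=-224<0$, so $H_{\vc m}$ is a genuine hyperbolic hyperplane and $\partial H_{\vc m}\cong\Bbb R^7$ is the cross section. Because each $s_i$ ($i\ge1$) is perpendicular to $\vc m$, its plane restricts to a sphere in $\Bbb R^7$ and the relation $s_i\cdot s_j=2$ is inherited, so $s_1,\dots,s_9$ restrict to nine mutually tangent spheres, the weak Apollonian cluster for $N=9$.

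Next I would record which original reflections survive on $H_{\vc m}$. From $\bb_l\cdot\vc m=8(\delta_{l0}-2\delta_{l9})$ exactly $\bb_1,\dots,\bb_8$ are orthogonal to $\vc m$ (while $\bb_0$ and $\bb_9$ are not), so $R_{\bb_1},\dots,R_{\bb_8}$ act on $H_{\vc m}$ and realize the path $T_8$; together with $s_1\cdot\bb_8=2$ this exhibits precisely the $T_8$-with-bold-pendant subgraph demanded by Theorem~\ref{t3.1}, which already yields the weak Apollonian property. The cross-section packing itself is $\{g\nu:g\in\CC,\ g\nu\cdot\vc m=0\}$ restricted to $H_{\vc m}$, and it is invariant under the stabilizer $\CC_{\vc m}=\{g\in\CC:g\vc m=\pm\vc m\}$.

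The heart of the argument is to compute the Coxeter diagram of $\CC_{\vc m}$ on $H_{\vc m}$ and match it to the $N=9$ graph of \fref{fig2b}. The naive attempt fails: the region $\F_\bb\cap H_{\vc m}$ is bounded on $H_{\vc m}$ not only by the genuine walls $H_{\bb_1},\dots,H_{\bb_8}$ but also by the traces of $H_{\bb_0}$ and $H_{\bb_9}$, which are \emph{not} mirrors of $\CC_{\vc m}$; and simply adjoining the bold pendant $s_1$ to the $A_8$ path gives a diagram of infinite covolume, since deleting the far end node leaves an indefinite rank-$8$ subdiagram. I would therefore run Vinberg's algorithm in the rank-$9$ lattice $\LL_\bb\cap\vc m^\perp$ from a basepoint interior to $H_{\vc m}$, keeping $\bb_1,\dots,\bb_8$ as the first simple roots and unfolding across the two fake walls until the remaining simple root perpendicular to $\vc m$ is located, producing a finite-covolume hyperbolic simplex whose incidence matrix I would compare entry by entry with Maxwell's $N=9$ diagram. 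Maximality of the packing would then follow exactly as in Section~\ref{s2}, via Maxwell's Theorem~3.3 \cite{Max82}, by checking that every weight of the cross-section diagram lies in the convex closure of the orbit. This unfolding step, namely pinning down the true fundamental domain of $\CC_{\vc m}$ and certifying finite covolume, is the main obstacle.

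Finally, general lattice type is immediate and independent of the diagram computation. Every sphere of the cross section has the form $g\nu$ with $g\in\CC\subset O_{\LL_\bb}^+$ and $g\nu\cdot\vc m=0$, hence lies in the rank-$9$ lattice $\LL_\bb\cap\vc m^\perp$ (recall $\vc m\in\LL_\bb$). Moreover the nine mutually tangent spheres $s_1,\dots,s_9$ have Gram matrix $-4I+2\,\mathbf 1\mathbf 1^{T}$, with determinant $14\cdot(-4)^{8}\neq0$, so they span $\vc m^\perp\cong\Bbb R^{1,8}$. Thus the packing spans and each of its vectors already lies in a fixed lattice, which is precisely the condition of general lattice type.
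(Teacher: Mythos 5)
Your preliminary linear algebra is correct and consistent with the paper's setup: $\vc m=\sum_{i=0}^9 s_i-8s_0=8(w_0-\nu)$, $\vc m\cdot\vc m=-224$, $\bb_l\cdot\vc m=8(\delta_{l0}-2\delta_{l9})$, and $s_1\cdot\bb_8=2$ all check out (your choice to drop $s_0$ where the paper drops $s_9$ is immaterial). The fatal gap is your identification of the cross section with $\{g\nu:g\in\CC,\ g\nu\cdot\vc m=0\}$, i.e.\ with the traces of only those spheres of $\P_\bb$ meeting the hyperplane perpendicularly. A codimension-one cross section consists of the traces of \emph{all} spheres that meet the hyperplane, and in $\P_\bb$ some spheres cross it at an angle that is neither right nor zero: in the paper's coordinates (hyperplane $H$ perpendicular to $s_0,\dots,s_8$, normal $h$ with $h\cdot h=-14$), the spheres $s_{10}=R_{\bb_0}(s_6)$ and $s_{11}=R_{\bb_9}(s_{10})$ satisfy $s_{10}\cdot h=4$, $s_{11}\cdot h=-4$, and their traces on $H$ are scalar multiples of the weights $w_3'$ and $w_4'$. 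The full cross section is therefore $\P_7=\CC_7(\{s_0,w_3',w_4'\})$, three orbits rather than one, and this is exactly what matches Maxwell's $N=9$ packing (the orbit of \emph{all} space-like weights of that graph), what makes the cross section only weakly Apollonian, and what makes it of general lattice type but not of lattice type. Your perpendicular-only set is the proper sub-packing $\CC_7(s_0)$, so even a completed version of your program would identify the wrong packing. Worse, your final paragraph would prove too much: it shows every sphere of your set lies \emph{in} the lattice $\LL_\bb\cap\vc m^\perp$, which would make the packing of lattice type, whereas the paper shows it is not (curvatures in the orbit of $s_0$ are integer multiples of $\sqrt 2$, those in the orbits of $w_3'$ and $w_4'$ are integer multiples of $\sqrt{42}/3$); the oblique orbits, which lie in the lattice only up to scale, are precisely what you have dropped.

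The second gap is the one you flag yourself: the ninth generating reflection is never produced, and the Vinberg-style search you propose aims at the wrong target. Maxwell's $N=9$ graph generates a sphere packing, so its reflection group has \emph{infinite} covolume in $\Bbb H^8$ (its limit set is the residual set of the packing, a proper closed subset of the sphere at infinity); consequently, if Vinberg's algorithm on $\LL_\bb\cap\vc m^\perp$ terminates in a finite-covolume polytope, that polytope's diagram cannot be the $N=9$ graph of \fref{fig2b}, and the comparison you envision must fail. Moreover, the group that matters is not the full reflection group of the sublattice but the reflections lying in $\CC$ that stabilize the cross-section plane. The paper resolves both issues concretely: knowing the expected graph, it solves the linear conditions to get $\bb_1'=[2,1,1,2,3,2,1,0,0,0]$ (perpendicular to $h$, joined by simple edges to $\bb_2$ and $\bb_7$), proves $R_{\bb_1'}\in\CC$ by a descent argument, notes that $\{\bb_1',\bb_2,\dots,\bb_9\}$ is a basis of the cross-section subspace realizing the $N=9$ graph, and only then accounts for the oblique orbits to conclude $\P_7=\CC_7(\{s_0,w_3',w_4'\})$. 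Your observation via Theorem~\ref{t3.1} (the path on $\bb_1,\dots,\bb_8$ with $s_1$ attached by a bold edge) does yield the cluster of nine mutually tangent spheres, but that weak Apollonian property was never the contested part of the statement.
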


\begin{proof}
Let $H$ be the plane perpendicular to $s_i$ for $i=0$, ..., $8$.  Then $H$ has normal vector
\[
h=[-1,3,2,1,0,0,0,0,0,0].
\]
Note that $h\cdot \bb_i=0$ for $i=2$,...,$9$.  Knowing the expected Coxeter graph, we solve for $\bb_1'$ so that we get:
\[
\includegraphics{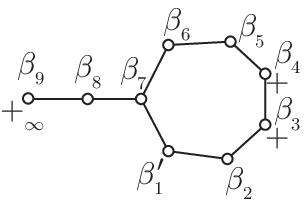}
\]
Thus we want $\bb_1'\cdot \bb_i=0$ for $i\neq 0$, $2$, or $7$; $\bb_1'\cdot h=0$; $\bb_1'\cdot \bb_2=\bb_1'\cdot \bb_7=1$; and $\bb_1'\cdot \bb_1'=-2$.  We get $\bb_1'=[2,1,1,2,3,2,1,0,0,0]$.  Using descent, we show $R_{\bb_1'}\in \CC$.  The set $\{\bb_1',\bb_2,...,\bb_9\}$ forms a basis of the subspace $H$.  Let $J_7$ be the incidence matrix for this basis of $H$, and let $w_i'$ be the weights.  Since $s_0\cdot h=0$, we get $s_0=w_9'$.  Let $\CC_7=\langle R_{\bb_1'},R_{\bb_2},...,R_{\bb_9}\rangle$.  Then the spheres in $\CC_7(s_0)$ all intersect $H$ perpendicularly.
Most spheres in $\P_\bb$ miss $H$ and some are tangent to $S$,  but there are some that intersect $H$ at an angle that is not right.
In particular, let $s_{10}=R_{\bb_0}(s_6)$ and $s_{11}=R_{\bb_9}(s_{10})$.  Then $s_{10}\cdot h=4$ and $s_{11}\cdot h=-4$, while $h\cdot h=-14$, so these two spheres intersect $H$ but not perpendicularly nor tangentially.  The difference in signs (the $\pm 4$) indicates that the centers of the spheres are on opposite sides of $H$.  The intersection of $s_{10}$ with $H$ is found by projecting the vector $s_{10}$ onto $H$ to get
\[
n=s_{10}-\frac{s_{10}\cdot h}{h\cdot h}h.
\]
We note that $n\cdot h=0$, $n\cdot \bb_1'=0$, and $n\cdot \bb_i=0$ for $i\neq 0$, $1$, or $4$, so $n$ is a scalar multiple of $w_3'$.  Similarly, the projection of $s_{11}$ onto $H$ is a scalar multiple of $w_4'$.  Thus, the intersection of $\P_\bb$ with $H$ is the packing
\[
\P_7=\CC_7(\{s_0,w_3',w_4'\}).
\]
The spheres in $\CC_7(s_0)$ in the perspective with $w_8$ the point at infinity  (or any other lattice point for the point at infinity) all have curvature an integer multiple of $\sqrt 2$, while those in $\CC_7(w_3')$ and $\CC_7(w_4')$ all have curvature an integer multiple of $\sqrt{42}/3$.  Thus, the packing is not of lattice type, though it is of general lattice type, since all spheres have integer coordinates in the basis $\{\bb_1',\bb_2,...,\bb_9\}$.
\end{proof}

\begin{remark}  Every sphere in $\CC_7(s_0)$ is a member of a cluster of $9$ mutually tangent spheres.  The same cannot be said of the spheres in the orbits of $w_3'$ and $w_4'$.
\end{remark}

\begin{remark}  A similar cross section of the (known) Apollonian packings in dimensions $n\leq 6$ intersect all spheres perpendicularly, so give the Apollonian packing in one dimension lower.
\end{remark}

\begin{remark}  The spheres $s_{10}$ and $s_{11}$ are tangent at the point $P_1=s_{10}+s_{11}$, which lies in the subspace $H$ and is on $\partial \H$.  Let $P_2=R_{\bb_3}(P_1)$.

For two points $A$ and $B$ on $\partial \H$, define the {\it Bertini involution} to be
\[
\phi_{A,B}(\vc x)=2\frac{(A\cdot \vc x)B+(B\cdot \vc x)A}{A\cdot B}-\vc x.
\]
This is the map that is $-1$ on $\partial \H_A$ through the point $B$.

The Bertini involutions $\phi_{P_1,w_8}$ and $\phi_{P_1,P_2}$ both preserve the lattice $\bb_1'\Bbb Z\oplus \bb_2\Bbb Z\oplus \cdots \oplus \bb_9\Bbb Z$.  
Note that $\phi_{P_1,P_2}(w_4')=-w_4'$, so $\phi_{P_1,P_2}$ sends everything on one side of the plane $H_{w_4'}$ to the other side.  The map $\phi_{P_1,w_8}$ sends $w_3'$ to $w_4'$.  Let
\[
\CC_7'=\langle \CC_7,\phi_{P_1,P_2},\phi_{P_1,w_8} \rangle.
\]
Since $\CC_7(s_0)$ does not intersect $H_{w_3'}$ or $H_{w_4'}$, $\CC_7'(s_0)$ is a sphere packing.  Because $\P_7$ is maximal, $\CC_7'(s_0)$ is also maximal.  It is lattice like and Apollonian.  Since $\CC_7\leq  \CC_7'$, the limit set of $\CC_7$ is a subset of the limit set of $\CC_7'$.  Since these are the residual sets of the respective packings, the packing $\P_7$, which is only weakly Apollonian, is more efficient than the Apollonian packing $\CC_7'(s_0)$.  

The packing $\CC_7'(s_0)$ is the packing described in \cite{Bar19}.  The cone $\K_{\CC_7'(s_0)}$ is the ample cone for a class of K3 surfaces.  The packing $\P_7$, though, cannot be the ample cone of any K3 surface.  
\end{remark}

\section{A sphere packing in $\Bbb R^9$}\label{s5}

The sphere packing generated by the Coxeter graph with $N=11$ in \fref{fig2b} can be described as follows:  Let $H$ be an 8-dimensional subspace of $\Bbb R^9$ (with normal vector $h$) and let us place spheres of radius $1/\sqrt 2$ at each vertex of a copy of the $E_8$ lattice imbedded in $H$.  Let us place two hyperplanes parallel to $H$ a distance of $1/\sqrt 2$ on either side, so that they are tangent to all the spheres.  Let us distinguish the sphere $s_0$ centered at the origin of the $E_8$ lattice and let its points of tangencies with the hyperplanes be $A$ and $B$.  Let $\ss$ be inversion in the sphere centered at $A$ and through $B$.  The packing is the image of these spheres under the group generated by $\ss$, the symmetries of the $E_8$ lattice, and reflection $R_h$ in $H$.

To see this, recall that the spheres centered on the vertices of the $E_8$ lattice are generated by the image of $s_0$ under the action of the Weyl group $W_{236}$, which is $\langle R_{\bb_0},R_{\bb_2},...,R_{\bb_9}\rangle$ for the $T_{236}$ subgraph of Graph (\ref{C2}).  The planes $H_{\bb_i}$ are perpendicular to $s_0$ for $i\neq 9$, and hence also perpendicular to $\ss$.  (We ignore $i=1$ in this discussion.)  Note that $H_{\bb_9}$ is tangent to $s_0$.  Let $s_1=R_{\bb_9}(s_0)$, giving us the cross section shown in \fref{fig4b}.
\myfig{fig4b}{A cross section perpendicular to $H$, $s_0$, and $s_1$.}
We note that $\ss$ and $\bb_9$ are at an angle of $2\pi/3$.  We note that $h$ is perpendicular to $\bb_i$ for all $i$, and $h$ and $\ss$ intersect at an angle of $2\pi/3$.  Finally, let $\nu$ represent the plane tangent to $s_0$ at $A$.  Then $\nu$ is parallel to $h$, perpendicular to $\ss$, and perpendicular to $\bb_i$ for all $i$.  This gives us the Coxeter graph
\[
\includegraphics{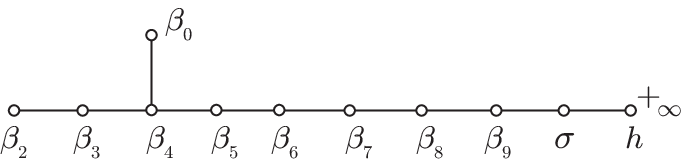}
\]
as desired, where $\nu$ is the weight at the node  $h$.  Maxwell verifies that this packing is maximal (see the discussion after Theorem 3.3 in \cite{Max82}).

\begin{remark}  Maxwell also presents the packing with Coxeter graph
\[
\includegraphics{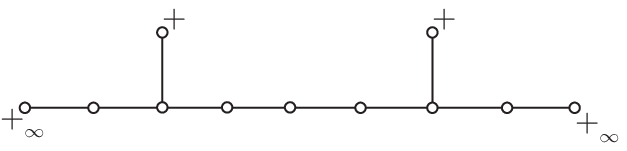}
\]
which generates the same packing.  Maxwell identifies an invariant of packings of lattice type and notes that these two packings have the same invariant, but presumably could not show equivalence.
\end{remark}

\begin{bibdiv}

\begin{biblist}

\bib{All18}{article}{
   author={Allcock, Daniel},
   title={Congruence subgroups and Enriques surface automorphisms},
   journal={J. Lond. Math. Soc. (2)},
   volume={98},
   date={2018},
   number={1},
   pages={1--11},
   issn={0024-6107},
   review={\MR{3847229}},
   doi={10.1112/jlms.12113},
}

\ignore{\bib{A-V93}{article}{
   author={Alekseevskij, D. V.},
   author={Vinberg, \`E. B.},
   author={Solodovnikov, A. S.},
   title={Geometry of spaces of constant curvature},
   conference={
      title={Geometry, II},
   },
   book={
      series={Encyclopaedia Math. Sci.},
      volume={29},
      publisher={Springer, Berlin},
   },
   date={1993},
   pages={1--138},
   review={\MR{1254932}},
   doi={10.1007/978-3-662-02901-5\_1},
}

\bib{Apa00}{book}{
   author={Apanasov, Boris N.},
   title={Conformal geometry of discrete groups and manifolds},
   series={De Gruyter Expositions in Mathematics},
   volume={32},
   publisher={Walter de Gruyter \& Co., Berlin},
   date={2000},
   pages={xiv+523},
   isbn={3-11-014404-2},
   review={\MR{1800993}},
   doi={10.1515/9783110808056},
}
	
\bib{Bar01}{book}{
   author={Baragar, Arthur},
   title={A Survey of Classical and Modern Geometries},
   publisher={Prentice Hall},
   date={2001},
   address={Upper Saddle River, NJ},
   pages={xiv+370},
   isbn={0-13-014318-9},
}

\bib{Bar11}{article}{
   author={Baragar, Arthur},
   title={The ample cone for a $K3$ surface},
   journal={Canad. J. Math.},
   volume={63},
   date={2011},
   number={3},
   pages={481--499},
   issn={0008-414X},
   review={\MR{2828530 (2012f:14071)}},
   doi={10.4153/CJM-2011-006-7},
}
}

\bib{Bar17}{article}{
  author={Baragar, Arthur},
  title={The Apollonian circle packing and ample cones for K3 surfaces},
  eprint={arXiv:1708.06061},
  status={to appear},
  year={2017},
}

\bib{Bar17b}{article}{
  author={Baragar, Arthur},
  title={The Neron-Tate pairing and elliptic K3 surfaces},
  eprint={arXiv:1708.05998},
  status={to appear},
  year={2017},
}

\bib{Bar18}{article}{
   author={Baragar, Arthur},
   title={Higher dimensional Apollonian packings, revisited},
   journal={Geom. Dedicata},
   volume={195},
   date={2018},
   pages={137--161},
   issn={0046-5755},
   review={\MR{3820499}},
   doi={10.1007/s10711-017-0280-7},
}

\bib{Bar19}{article}{
  author={Baragar, Arthur},
  title={Apollonian packings in seven and eight dimensions},
  eprint={arXiv:1901.04316},
  status={to appear},
  year={2019},
}

\ignore{
\bib{Boy73}{article}{
   author={Boyd, David W.},
   title={The osculatory packing of a three dimensional sphere},
   journal={Canad. J. Math.},
   volume={25},
   date={1973},
   pages={303--322},
   issn={0008-414X},
   review={\MR{0320897}},
   doi={10.4153/CJM-1973-030-5},
}}

\bib{Boy74}{article}{
   author={Boyd, David W.},
   title={A new class of infinite sphere packings},
   journal={Pacific J. Math.},
   volume={50},
   date={1974},
   pages={383--398},
   issn={0030-8730},
   review={\MR{0350626}},
}

\bib{CL15}{article}{
   author={Chen, Hao},
   author={Labb\'{e}, Jean-Philippe},
   title={Lorentzian Coxeter systems and Boyd-Maxwell ball packings},
   journal={Geom. Dedicata},
   volume={174},
   date={2015},
   pages={43--73},
   issn={0046-5755},
   review={\MR{3303040}},
   doi={10.1007/s10711-014-0004-1},
}

\ignore{
\bib{Boy82}{article}{
   author={Boyd, David W.},
   title={The sequence of radii of the Apollonian packing},
   journal={Math. Comp.},
   volume={39},
   date={1982},
   number={159},
   pages={249--254},
   issn={0025-5718},
   review={\MR{658230}},
   doi={10.2307/2007636},
}

\bib{Cli68}{article}{
  author={Clifford, W.H.},
  title={On the powers of spheres (1868)},
  booktitle={Mathematical papers},
  year={1882},
  publisher={Macmillan},
  address={London},
}

\bib{Dar72}{article}{
   author={Darboux, Gaston},
   title={Sur les relations entre les groupes de points, de cercles et de
   sph\`eres dans le plan et dans l'espace},
   language={French},
   journal={Ann. Sci. \'Ecole Norm. Sup. (2)},
   volume={1},
   date={1872},
   pages={323--392},
   issn={0012-9593},
   review={\MR{1508589}},
}}

\bib{Cob19}{article}{
   author={Coble, Arthur B.},
   title={The Ten Nodes of the Rational Sextic and of the Cayley Symmetroid},
   journal={Amer. J. Math.},
   volume={41},
   date={1919},
   number={4},
   pages={243--265},
   issn={0002-9327},
   review={\MR{1506391}},
   doi={10.2307/2370285},
}

\bib{CD89}{book}{
   author={Cossec, Fran\c{c}ois R.},
   author={Dolgachev, Igor V.},
   title={Enriques surfaces. I},
   series={Progress in Mathematics},
   volume={76},
   publisher={Birkh\"{a}user Boston, Inc., Boston, MA},
   date={1989},
   pages={x+397},
   isbn={0-8176-3417-7},
   review={\MR{986969}},
   doi={10.1007/978-1-4612-3696-2},
}

\bib{Dol16a}{article}{
  author={Dolgachev, Igor},
  title={Orbital counting of curves on algebraic surfaces and sphere packings},
  booktitle={K3 surfaces and their moduli},
  year={2016},
  publisher={Springer International Publishing},
  address={Cham},
  pages={17--53},
  isbn={978-3-319-29959-4},
  doi={10.1007/978-3-319-29959-4\_2},
  url={http://dx.doi.org/10.1007/978-3-319-29959-4_2},
}

\bib{Dol16b}{article}{
   author={Dolgachev, Igor V.},
   title={A brief introduction to Enriques surfaces},
   conference={
      title={Development of moduli theory---Kyoto 2013},
   },
   book={
      series={Adv. Stud. Pure Math.},
      volume={69},
      publisher={Math. Soc. Japan, [Tokyo]},
   },
   date={2016},
   pages={1--32},
   review={\MR{3586505}},
}

\bib{GLM06}{article}{
   author={Graham, Ronald L.},
   author={Lagarias, Jeffrey C.},
   author={Mallows, Colin L.},
   author={Wilks, Allan R.},
   author={Yan, Catherine H.},
   title={Apollonian circle packings: geometry and group theory. II.
   Super-Apollonian group and integral packings},
   journal={Discrete Comput. Geom.},
   volume={35},
   date={2006},
   number={1},
   pages={1--36},
   issn={0179-5376},
   review={\MR{2183489}},
   doi={10.1007/s00454-005-1195-x},
}

\ignore{
\bib{G-M10}{article}{
   author={Guettler, Gerhard},
   author={Mallows, Colin},
   title={A generalization of Apollonian packing of circles},
   journal={J. Comb.},
   volume={1},
   date={2010},
   number={1, [ISSN 1097-959X on cover]},
   pages={1--27},
   issn={2156-3527},
   review={\MR{2675919}},
   doi={10.4310/JOC.2010.v1.n1.a1},
}}
	
\bib{KN2019}{article}{
   author={Kontorovich, Alex},
   author={Nakamura, Kei},
   title={Geometry and arithmetic of crystallographic sphere packings},
   journal={Proc. Natl. Acad. Sci. USA},
   volume={116},
   date={2019},
   number={2},
   pages={436--441},
   issn={1091-6490},
   review={\MR{3904690}},
   doi={10.1073/pnas.1721104116},
}	

\ignore{	
\bib{Kov94}{article}{
   author={Kov{\'a}cs, S{\'a}ndor J.},
   title={The cone of curves of a $K3$ surface},
   journal={Math. Ann.},
   volume={300},
   date={1994},
   number={4},
   pages={681--691},
   issn={0025-5831},
   review={\MR{1314742 (96a:14044)}},
   doi={10.1007/BF01450509},
}}

\bib{LMW02}{article}{
   author={Lagarias, Jeffrey C.},
   author={Mallows, Colin L.},
   author={Wilks, Allan R.},
   title={Beyond the Descartes circle theorem},
   journal={Amer. Math. Monthly},
   volume={109},
   date={2002},
   number={4},
   pages={338--361},
   issn={0002-9890},
   review={\MR{1903421}},
   doi={10.2307/2695498},
}

\ignore{
\bib{Max81}{article}{
   author={Maxwell, George},
   title={Space groups of Coxeter type},
   booktitle={Proceedings of the Conference on Kristallographische Gruppen
   (Univ. Bielefeld, Bielefeld, 1979), Part II},
   journal={Match},
   number={10},
   date={1981},
   pages={65--76},
   issn={0340-6253},
   review={\MR{620801}},
}}

\bib{Max82}{article}{
   author={Maxwell, George},
   title={Sphere packings and hyperbolic reflection groups},
   journal={J. Algebra},
   volume={79},
   date={1982},
   number={1},
   pages={78--97},
   issn={0021-8693},
   review={\MR{679972}},
   doi={10.1016/0021-8693(82)90318-0},
}

\ignore{
\bib{McM98}{article}{
   author={McMullen, Curtis T.},
   title={Hausdorff dimension and conformal dynamics. III. Computation of
   dimension},
   journal={Amer. J. Math.},
   volume={120},
   date={1998},
   number={4},
   pages={691--721},
   issn={0002-9327},
   review={\MR{1637951}},
}}
\bib{McM}{webpage}{
  author={McMullen, Curtis T.},
  title={Kleinian groups},
  url={http://people.math.harvard.edu/~ctm/programs/index.html},
}

\bib{Mor84}{article}{
   author={Morrison, D. R.},
   title={On $K3$ surfaces with large Picard number},
   journal={Invent. Math.},
   volume={75},
   date={1984},
   number={1},
   pages={105--121},
   issn={0020-9910},
   review={\MR{728142}},
   doi={10.1007/BF01403093},
}

\bib{Rat06}{book}{
   author={Ratcliffe, John G.},
   title={Foundations of hyperbolic manifolds},
   series={Graduate Texts in Mathematics},
   volume={149},
   edition={2},
   publisher={Springer, New York},
   date={2006},
   pages={xii+779},
   isbn={978-0387-33197-3},
   isbn={0-387-33197-2},
   review={\MR{2249478}},
}

\bib{Sod37}{article}{
  author={Soddy, Frederick},
  title={The bowl of integers and the hexlet},
  journal={Nature},
  volume={139},
  year={1937-01-09},
  pages={77--79},
  doi={10.1038/139077a0}
}

\bib{Sul84}{article}{
   author={Sullivan, Dennis},
   title={Entropy, Hausdorff measures old and new, and limit sets of
   geometrically finite Kleinian groups},
   journal={Acta Math.},
   volume={153},
   date={1984},
   number={3-4},
   pages={259--277},
   issn={0001-5962},
   review={\MR{766265}},
   doi={10.1007/BF02392379},
}

\ignore{
\bib{V-S93}{article}{
   author={Vinberg, \`E. B.},
   author={Shvartsman, O. V.},
   title={Discrete groups of motions of spaces of constant curvature},
   conference={
      title={Geometry, II},
   },
   book={
      series={Encyclopaedia Math. Sci.},
      volume={29},
      publisher={Springer, Berlin},
   },
   date={1993},
   pages={139--248},
   review={\MR{1254933}},
   doi={10.1007/978-3-662-02901-5\_2},
}	}	

\bib{Via17}{article}{
   author={Viazovska, Maryna S.},
   title={The sphere packing problem in dimension 8},
   journal={Ann. of Math. (2)},
   volume={185},
   date={2017},
   number={3},
   pages={991--1015},
   issn={0003-486X},
   review={\MR{3664816}},
   doi={10.4007/annals.2017.185.3.7},
}

\end{biblist}
\end{bibdiv}

\end{document}